\newcommand*\fullref[3][\relax]{%
  \ifdefined\hyperref%
    {\hyperref[#3]{#2\penalty 200\ \ref*{#3}#1}}%
  \else%
    {#2\penalty 200\ \relax\ref{#3}#1}%
  \fi%
}
\newcommand*{\defterm}[1]{\emph{#1}}
\newcommand\chyph{\penalty\@M-\hskip\z@skip}
\theoremstyle{definition}
\theoremstyle{plain}
\numberwithin{equation}{section}
\DeclarePairedDelimiter{\abs}{\lvert}{\rvert}
\DeclarePairedDelimiter{\parens}{\lparen}{\rparen}
\DeclarePairedDelimiterX{\gset}[2]{\{}{\}}{\,#1:#2\,}
\newcommand*{\biggg}{\bBigg@{4}}
\newcommand*{\Biggg}{\bBigg@{5}}
\newcommand*{\sizeddelimiter}[2]{\bBigg@{#1}#2}
\newcommand*{\nset}{\mathbb{N}}
\newcommand*{\rset}{\mathbb{R}}
\DeclarePairedDelimiterX{\pres}[2]{\langle}{\rangle}{#1\,\delimsize\vert\,\mathopen{}#2}
\newtheorem*{theorem*}{Theorem}
\DeclareMathOperator{\Inv}{Inv}
\begin{document}

\title{Visual thinking and simplicity in proof}

\author{Alan J. Cain}
\address{%
Centro de Matem\'{a}tica e Aplica\c{c}\~{o}es\\
Faculdade de Ci\^{e}ncias e Tecnologia\\
Universidade Nova de Lisboa\\
2829--516 Caparica\\
Portugal
}
\email{%
a.cain@fct.unl.pt
}
\thanks{The author was supported by the Funda\c{c}\~{a}o para a Ci\^{e}ncia e a Tecnologia (Portuguese Foundation for Science
and Technology) through an Investigador {\sc FCT} fellowship ({\sc IF}/01622/2013/{\sc CP}1161/{\sc
    CT}0001). This work was partially supported by by the Funda\c{c}\~{a}o para
a Ci\^{e}ncia e a Tecnologia (Portuguese Foundation for Science and Technology) through the project {\sc UID}/{\sc
  MAT}/00297/2013 (Centro de Matem\'{a}tica e Aplica\c{c}\~{o}es) and the project {\scshape PTDC}/{\scshape
      MHC-FIL}/2583/2014.}

\begin{abstract}
  This paper studies how spatial thinking interacts with simplicity in [informal] proof, by analysing a set of example
  proofs mainly concerned with Ferrers diagrams (visual representations of partitions of integers, and comparing them to
  proofs that do not use spatial thinking. The analysis shows that using diagrams and spatial thinking can contribute to
  simplicity by (for example) avoiding technical calculations, division into cases, and induction, and creating a more
  surveyable and explanatory proof (both of which are connected to simplicity). In response to one part of Hilbert's
  24th Problem, the area between two proofs is explored in one example, showing that between a proof that uses spatial
  reasoning and one that does not, there is a proof that is less simple than either.
\end{abstract}

\maketitle

\section{Introduction}

The use of visual thinking in mathematics --- broadly encompassing visual perception and visual imagination --- has
started to receive substantial attention from philosophers of mathematics. (See, for example,
\cite{mancosu_visualization,giaquinto_visualizing,giardino_intuition}.) This is unsurprising, given the importance
that mathematicians themselves place on visual thinking: famously, Jacques Hadamard wrote of those he surveyed:
\textquote[{\cite[p.~84]{hadamard_psychology}}]{Practically all of them avoid not only the use of mental words but
  also, just as I do, the mental use of algebraic or any other precise signs; also as in my case, they use vague
  images.}  (Notable exceptions are G.H. Hardy, who wrote:
\textquote[{\cite[p.~114]{hardy_reviewhadamard}}]{I think almost entirely in words and formulae\ldots\ I find
  thought almost impossible if my hands are cold and I cannot write in comfort}, and Bertrand Russell, who noted, in
the context of William James's distinction between the visual mind and auditory mind:
\textquote[{\cite[p.~57]{spadoni_russell}}]{I never think except in words which I imagine spoken}.)

Within the broad area of visual thinking is spatial thinking, meaning specifically thinking that involves visual
images of objects in particular geometric configurations.  This is to be distinguished from the visual thinking
involved in symbol manipulation (see, for example, \cite[ch.~10]{giaquinto_visual}) or in visualizing structure such as
ordered sets, where the geometric configuration can be varied without affecting the reasoning (see
\cite{giaquinto_cognition}).

The epistemological role of spatial thinking has been a particular focus of philosophers' attention. How can a
particular diagram (which may only illustrate a special case) yield knowledge about a general case?  (For a discussion,
see \cite{dove_canpictures}.) If a proof requires spatial thinking, is one justified in accepting the theorem? Spatial
thinking can be misleading, particularly (but not only) with regard to limits of infinite processes (see, for example,
\cite[pp.~3ff.]{giaquinto_visual}).

This paper leaves aside the epistemological controversy and instead focusses on how spatial thinking interacts with
simplicity in proof. The motivation includes two aspects of Hilbert's 24th Problem (see \cite{thiele_hilbert24}):
\textquote{criteria of simplicity} and \textquote{the area lying between the two routes}. Does spatial thinking
contribute to simplicity? Many mathematicians seem to think so; witness the famous \enquote{proofs without words}
features in \textit{Mathematics Magazine} and \textit{The College Mathematics Journal} and collected in
\cite{nelsen_proofswithoutwords1,nelsen_proofswithoutwords2}. Gardner said of these that
\textquote[{\cite[ch.~16]{gardner_mathematicalgames11}}]{in many cases a dull proof can be supplemented by a geometric
  analogue so \emph{simple} and beautiful that the truth of a theorem is almost seen at a single glance} (emphasis
added). If so, in what ways does spatial thinking lead to gains in simplicity? And what can be said about the
\textquote{area lying between} two proofs that use and that do not use spatial thinking?

The approach is to examine carefully particular proofs that use spatial thinking (for brevity, henceforth
\enquote{spatial proofs}) to see how the spatial thinking contributes to simplicity. All the proofs considered in this
paper are informal proofs. Although diagrammatic formal systems have been developed (see, for example,
\cite{miller_diagrammatic}), most proofs that employ spatial thinking are informal. Furthermore, simplicity is a
property that is more readily identified in informal proofs; as Iemhoff has noted, it seems difficult to determine the
extent to which simplicity is preserved under formalization \cite[p.~147]{iemhoff_remarks}. The main (but not sole)
focus is what can broadly be called \enquote{pebble diagrams}: representing natural numbers using dots, or, as the
Pythagoreans did, pebbles. (See \cite[pp.~100ff.]{burnet_earlygreekphilosophy} or \cite[p.~33]{burkert_lore} for
historical background and sources.) Such diagrams are still used in modern mathematics, for instance in the guise of
Ferrers diagrams of partitions (\fullref{Section}{sec:ferrers} below). There are two reasons for this choice. First,
with one exception, the \emph{theorems} discussed do not require any spatial thinking to understand. Second, considering
only proofs about discrete quantities may at least mitigate some of the concern about whether the proofs are legitimate,
since these avoid situations where limits of infinite processes play a part. For comparison, proofs that do not use
spatial thinking (for brevity, \enquote{non-spatial proofs}) are sometimes outlined insofar as necessary to compare them
with the spatial proofs. (It is perhaps worth noting that Sylvester, author of the paper in which Ferrers diagrams were
first published, declared himself perfectly satisfied by proofs that use them; he describes such a proof as
\textquote[{\cite[p.~597]{sylvester_impromptu}}]{so simple and instructive, that I am sure every logician will be
  delighted to meet with it here or elsewhere}. Ferrers diagrams were suggested by Sylvester's correspondent Ferrers;
see \cite{kimberling_originferrers} for a discussion of the origin of the name.)

Detlefsen \cite[p.~87]{detlefsen_philmath20thcentury} distinguished \emph{verificational} simplicity and
\emph{inventional} simplicity of a proof: the former is the simplicity found by a reader who sets out to work through
and check the proof; the latter is the simplicity involved in discovery of the proof in the first place. To this, one
can add \emph{explanatorial} simplicity: that is, the simplicity found by the reader who sets out to understand how the
proof explains the theorem. The notion of explanation in mathematics, and how it compares with explanation in the
natural sciences, is debated (see, for example, \cite{mancosu_mathematicalexplanation,zelcer_against}). In particular,
there is disagreement over whether proofs that use induction are explanatory \cite{hoeltje_explanation,lange_whyproofs}.

The examination of proofs here is concerned with verificational and explanatorial simplicity. Prima facie, from this
perspective, a proof has greater simplicity if it avoids technical calculations, complicated notation, division into
many cases, the use of technical results, \textquote[{\cite[pp.239--240]{tucker_rethinkingrigor}}]{deus-ex-machina
  auxiliary functions}, or (given the debate over explanatoriness mentioned in the last paragraph) induction. Length has
also been suggested as a criterion of simplicity; for instance by Thiele \& Wos \cite{thiele_hilbert24wos} and Arara
\cite{arana_onthealleged} (but see the countervailing views of of Iemhoff \cite{iemhoff_remarks}), but consideration of
length if avoided in what follows, because the difficulties of comparing lengths of proofs outside of the same formal
system are multiplied when one considers comparing proof steps that use diagrams with those that do not.

% Simplicity is concerned with communicability and intelligibility \cite[pp.~155,160]{floyd_fluidity}

\section{Diagrams and spatial thinking}

Before proceeding, it is important to distinguish between spatial thinking and using diagrams. On the one hand, a proof
can use diagrams and not call upon spatial thinking. Standard examples would be \enquote{diagram chasing} proofs of
results using commutative diagrams (see, for instance, \cite[Lemma~VIII.4.1, pp.~202--203]{maclane_categories}). In such
proofs, an element is \enquote{chased} around a diagram of arrows between objects. These arrows represent morphisms, and
alternative paths of arrows between different objects correspond to equal compositions of homomorphisms. But at no point
is spatial thinking actually used: the diagram could be redrawn in a messy form (say, with arrows unnecessarily
criss-crossing and objects scattered arbitrarily), or even be replaced with a list of equalities of compositions of
homomorphisms and the proof could remain identical. Thus, while analyzing how diagrams of arrows can simplify proofs
(which is the very starting point of category theory, according to Mac~Lane \cite[p.~1]{maclane_categories}), such
analysis is beyond the scope of this paper.

On the other hand, a proof can involve spatial thinking and yet involve no actual diagram. An example is Gale's proof of
the Hex Theorem, which shows that a game of Hex cannot end in a draw. A Hex board is usually a rhomboidal grid of
hexagons (traditionally $11 \times 11$, but more generally $n \times n$ or even $n \times m$), with sides of the rhombus
alternately coloured black and white, like the following:
\[
%%%%%%%%%%%%%%%%%%%%%%%%%%%%%%%%%%%%%%%%%%%%%%%%%%%%%
% Start of merged contents of file "figures/hex.tikz"
%
\begin{tikzpicture}[scale=.4,rotate=30]
  \draw[fill=black,line join=bevel] ($ ($ ({((2*0)-0)*cos(30)*3mm},{(1+sin(30))*0*3mm}) + (-150:3mm) $) + (210:3mm) $) -- ($ ({((2*0)-0)*cos(30)*3mm},{(1+sin(30))*0*3mm}) + (210:3mm) $) -- ($ ({((2*0)-0)*cos(30)*3mm},{(1+sin(30))*0*3mm}) + (150:3mm) $) -- ($ ({((2*0)-1)*cos(30)*3mm},{(1+sin(30))*1*3mm}) + (210:3mm) $) -- ($ ({((2*0)-1)*cos(30)*3mm},{(1+sin(30))*1*3mm}) + (150:3mm) $) -- ($ ({((2*0)-2)*cos(30)*3mm},{(1+sin(30))*2*3mm}) + (210:3mm) $) -- ($ ({((2*0)-2)*cos(30)*3mm},{(1+sin(30))*2*3mm}) + (150:3mm) $) -- ($ ({((2*0)-3)*cos(30)*3mm},{(1+sin(30))*3*3mm}) + (210:3mm) $) -- ($ ({((2*0)-3)*cos(30)*3mm},{(1+sin(30))*3*3mm}) + (150:3mm) $) -- ($ ({((2*0)-4)*cos(30)*3mm},{(1+sin(30))*4*3mm}) + (210:3mm) $) -- ($ ({((2*0)-4)*cos(30)*3mm},{(1+sin(30))*4*3mm}) + (150:3mm) $) -- ($ ({((2*0)-5)*cos(30)*3mm},{(1+sin(30))*5*3mm}) + (210:3mm) $) -- ($ ({((2*0)-5)*cos(30)*3mm},{(1+sin(30))*5*3mm}) + (150:3mm) $) -- ($ ({((2*0)-6)*cos(30)*3mm},{(1+sin(30))*6*3mm}) + (210:3mm) $) -- ($ ({((2*0)-6)*cos(30)*3mm},{(1+sin(30))*6*3mm}) + (150:3mm) $) -- ($ ({((2*0)-7)*cos(30)*3mm},{(1+sin(30))*7*3mm}) + (210:3mm) $) -- ($ ({((2*0)-7)*cos(30)*3mm},{(1+sin(30))*7*3mm}) + (150:3mm) $) -- ($ ({((2*0)-8)*cos(30)*3mm},{(1+sin(30))*8*3mm}) + (210:3mm) $) -- ($ ({((2*0)-8)*cos(30)*3mm},{(1+sin(30))*8*3mm}) + (150:3mm) $) -- ($ ({((2*0)-9)*cos(30)*3mm},{(1+sin(30))*9*3mm}) + (210:3mm) $) -- ($ ({((2*0)-9)*cos(30)*3mm},{(1+sin(30))*9*3mm}) + (150:3mm) $) -- ($ ({((2*0)-10)*cos(30)*3mm},{(1+sin(30))*10*3mm}) + (210:3mm) $) -- ($ ({((2*0)-10)*cos(30)*3mm},{(1+sin(30))*10*3mm}) + (150:3mm) $) -- ($ ($ ({((2*0)-10)*cos(30)*3mm},{(1+sin(30))*10*3mm}) + (150:3mm) $) + (150:3mm) $) -- ($ ($ ({((2*0)-10)*cos(30)*3mm},{(1+sin(30))*10*3mm}) + (-150:3mm) $) + (210:3mm) $) -- cycle;
  \draw[fill=black,line join=bevel] ($ ($ ({((2*10)-0)*cos(30)*3mm},{(1+sin(30))*0*3mm}) + (-30:3mm) $) + (330:3mm) $) -- ($ ({((2*10)-0)*cos(30)*3mm},{(1+sin(30))*0*3mm}) + (330:3mm) $) -- ($ ({((2*10)-0)*cos(30)*3mm},{(1+sin(30))*0*3mm}) + (30:3mm) $) -- ($ ({((2*10)-1)*cos(30)*3mm},{(1+sin(30))*1*3mm}) + (330:3mm) $) -- ($ ({((2*10)-1)*cos(30)*3mm},{(1+sin(30))*1*3mm}) + (30:3mm) $) -- ($ ({((2*10)-2)*cos(30)*3mm},{(1+sin(30))*2*3mm}) + (330:3mm) $) -- ($ ({((2*10)-2)*cos(30)*3mm},{(1+sin(30))*2*3mm}) + (30:3mm) $) -- ($ ({((2*10)-3)*cos(30)*3mm},{(1+sin(30))*3*3mm}) + (330:3mm) $) -- ($ ({((2*10)-3)*cos(30)*3mm},{(1+sin(30))*3*3mm}) + (30:3mm) $) -- ($ ({((2*10)-4)*cos(30)*3mm},{(1+sin(30))*4*3mm}) + (330:3mm) $) -- ($ ({((2*10)-4)*cos(30)*3mm},{(1+sin(30))*4*3mm}) + (30:3mm) $) -- ($ ({((2*10)-5)*cos(30)*3mm},{(1+sin(30))*5*3mm}) + (330:3mm) $) -- ($ ({((2*10)-5)*cos(30)*3mm},{(1+sin(30))*5*3mm}) + (30:3mm) $) -- ($ ({((2*10)-6)*cos(30)*3mm},{(1+sin(30))*6*3mm}) + (330:3mm) $) -- ($ ({((2*10)-6)*cos(30)*3mm},{(1+sin(30))*6*3mm}) + (30:3mm) $) -- ($ ({((2*10)-7)*cos(30)*3mm},{(1+sin(30))*7*3mm}) + (330:3mm) $) -- ($ ({((2*10)-7)*cos(30)*3mm},{(1+sin(30))*7*3mm}) + (30:3mm) $) -- ($ ({((2*10)-8)*cos(30)*3mm},{(1+sin(30))*8*3mm}) + (330:3mm) $) -- ($ ({((2*10)-8)*cos(30)*3mm},{(1+sin(30))*8*3mm}) + (30:3mm) $) -- ($ ({((2*10)-9)*cos(30)*3mm},{(1+sin(30))*9*3mm}) + (330:3mm) $) -- ($ ({((2*10)-9)*cos(30)*3mm},{(1+sin(30))*9*3mm}) + (30:3mm) $) -- ($ ({((2*10)-10)*cos(30)*3mm},{(1+sin(30))*10*3mm}) + (330:3mm) $) -- ($ ({((2*10)-10)*cos(30)*3mm},{(1+sin(30))*10*3mm}) + (30:3mm) $) -- ($ ($ ({((2*10)-10)*cos(30)*3mm},{(1+sin(30))*10*3mm}) + (30:3mm) $) + (30:3mm) $) -- ($ ($ ({((2*10)-0)*cos(30)*3mm},{(1+sin(30))*0*3mm}) + (30:3mm) $) + (30:3mm) $) -- cycle;
  \draw[line join=bevel] ($ ($ ({((2*0)-0)*cos(30)*3mm},{(1+sin(30))*0*3mm}) + (-90:3mm) $) + (270:3mm) $) -- ($ ($ ({((2*0)-0)*cos(30)*3mm},{(1+sin(30))*0*3mm}) + (-150:3mm) $) + (210:3mm) $) -- ($ ({((2*0)-0)*cos(30)*3mm},{(1+sin(30))*0*3mm}) + (210:3mm) $) -- ($ ({((2*0)-0)*cos(30)*3mm},{(1+sin(30))*0*3mm}) + (270:3mm) $) -- ($ ({((2*0)-0)*cos(30)*3mm},{(1+sin(30))*0*3mm}) + (330:3mm) $) -- ($ ({((2*1)-0)*cos(30)*3mm},{(1+sin(30))*0*3mm}) + (270:3mm) $) -- ($ ({((2*1)-0)*cos(30)*3mm},{(1+sin(30))*0*3mm}) + (330:3mm) $) -- ($ ({((2*2)-0)*cos(30)*3mm},{(1+sin(30))*0*3mm}) + (270:3mm) $) -- ($ ({((2*2)-0)*cos(30)*3mm},{(1+sin(30))*0*3mm}) + (330:3mm) $) -- ($ ({((2*3)-0)*cos(30)*3mm},{(1+sin(30))*0*3mm}) + (270:3mm) $) -- ($ ({((2*3)-0)*cos(30)*3mm},{(1+sin(30))*0*3mm}) + (330:3mm) $) -- ($ ({((2*4)-0)*cos(30)*3mm},{(1+sin(30))*0*3mm}) + (270:3mm) $) -- ($ ({((2*4)-0)*cos(30)*3mm},{(1+sin(30))*0*3mm}) + (330:3mm) $) -- ($ ({((2*5)-0)*cos(30)*3mm},{(1+sin(30))*0*3mm}) + (270:3mm) $) -- ($ ({((2*5)-0)*cos(30)*3mm},{(1+sin(30))*0*3mm}) + (330:3mm) $) -- ($ ({((2*6)-0)*cos(30)*3mm},{(1+sin(30))*0*3mm}) + (270:3mm) $) -- ($ ({((2*6)-0)*cos(30)*3mm},{(1+sin(30))*0*3mm}) + (330:3mm) $) -- ($ ({((2*7)-0)*cos(30)*3mm},{(1+sin(30))*0*3mm}) + (270:3mm) $) -- ($ ({((2*7)-0)*cos(30)*3mm},{(1+sin(30))*0*3mm}) + (330:3mm) $) -- ($ ({((2*8)-0)*cos(30)*3mm},{(1+sin(30))*0*3mm}) + (270:3mm) $) -- ($ ({((2*8)-0)*cos(30)*3mm},{(1+sin(30))*0*3mm}) + (330:3mm) $) -- ($ ({((2*9)-0)*cos(30)*3mm},{(1+sin(30))*0*3mm}) + (270:3mm) $) -- ($ ({((2*9)-0)*cos(30)*3mm},{(1+sin(30))*0*3mm}) + (330:3mm) $) -- ($ ({((2*10)-0)*cos(30)*3mm},{(1+sin(30))*0*3mm}) + (270:3mm) $) -- ($ ({((2*10)-0)*cos(30)*3mm},{(1+sin(30))*0*3mm}) + (330:3mm) $) -- ($ ($ ({((2*10)-0)*cos(30)*3mm},{(1+sin(30))*0*3mm}) + (-30:3mm) $) + (330:3mm) $) -- ($ ($ ({((2*10)-0)*cos(30)*3mm},{(1+sin(30))*0*3mm}) + (-90:3mm) $) + (270:3mm) $) -- cycle;
  \draw[line join=bevel] ($ ($ ({((2*0)-10)*cos(30)*3mm},{(1+sin(30))*10*3mm}) + (90:3mm) $) + (90:3mm) $) -- ($ ($ ({((2*0)-10)*cos(30)*3mm},{(1+sin(30))*10*3mm}) + (150:3mm) $) + (150:3mm) $) -- ($ ({((2*0)-10)*cos(30)*3mm},{(1+sin(30))*10*3mm}) + (150:3mm) $) -- ($ ({((2*0)-10)*cos(30)*3mm},{(1+sin(30))*10*3mm}) + (90:3mm) $) -- ($ ({((2*0)-10)*cos(30)*3mm},{(1+sin(30))*10*3mm}) + (30:3mm) $) -- ($ ({((2*1)-10)*cos(30)*3mm},{(1+sin(30))*10*3mm}) + (90:3mm) $) -- ($ ({((2*1)-10)*cos(30)*3mm},{(1+sin(30))*10*3mm}) + (30:3mm) $) -- ($ ({((2*2)-10)*cos(30)*3mm},{(1+sin(30))*10*3mm}) + (90:3mm) $) -- ($ ({((2*2)-10)*cos(30)*3mm},{(1+sin(30))*10*3mm}) + (30:3mm) $) -- ($ ({((2*3)-10)*cos(30)*3mm},{(1+sin(30))*10*3mm}) + (90:3mm) $) -- ($ ({((2*3)-10)*cos(30)*3mm},{(1+sin(30))*10*3mm}) + (30:3mm) $) -- ($ ({((2*4)-10)*cos(30)*3mm},{(1+sin(30))*10*3mm}) + (90:3mm) $) -- ($ ({((2*4)-10)*cos(30)*3mm},{(1+sin(30))*10*3mm}) + (30:3mm) $) -- ($ ({((2*5)-10)*cos(30)*3mm},{(1+sin(30))*10*3mm}) + (90:3mm) $) -- ($ ({((2*5)-10)*cos(30)*3mm},{(1+sin(30))*10*3mm}) + (30:3mm) $) -- ($ ({((2*6)-10)*cos(30)*3mm},{(1+sin(30))*10*3mm}) + (90:3mm) $) -- ($ ({((2*6)-10)*cos(30)*3mm},{(1+sin(30))*10*3mm}) + (30:3mm) $) -- ($ ({((2*7)-10)*cos(30)*3mm},{(1+sin(30))*10*3mm}) + (90:3mm) $) -- ($ ({((2*7)-10)*cos(30)*3mm},{(1+sin(30))*10*3mm}) + (30:3mm) $) -- ($ ({((2*8)-10)*cos(30)*3mm},{(1+sin(30))*10*3mm}) + (90:3mm) $) -- ($ ({((2*8)-10)*cos(30)*3mm},{(1+sin(30))*10*3mm}) + (30:3mm) $) -- ($ ({((2*9)-10)*cos(30)*3mm},{(1+sin(30))*10*3mm}) + (90:3mm) $) -- ($ ({((2*9)-10)*cos(30)*3mm},{(1+sin(30))*10*3mm}) + (30:3mm) $) -- ($ ({((2*10)-10)*cos(30)*3mm},{(1+sin(30))*10*3mm}) + (90:3mm) $) -- ($ ({((2*10)-10)*cos(30)*3mm},{(1+sin(30))*10*3mm}) + (30:3mm) $) -- ($ ($ ({((2*10)-10)*cos(30)*3mm},{(1+sin(30))*10*3mm}) + (30:3mm) $) + (30:3mm) $) -- ($ ($ ({((2*10)-10)*cos(30)*3mm},{(1+sin(30))*10*3mm}) + (90:3mm) $) + (90:3mm) $) -- cycle;
  \draw[fill=lightgray] ({((2*0)-0)*cos(30)*3mm},{(1+sin(30))*0*3mm}) +(30:3mm) -- +(90:3mm) -- +(150:3mm) -- +(-150:3mm) -- +(-90:3mm) -- +(-30:3mm) -- cycle;
  \draw[fill=lightgray] ({((2*1)-0)*cos(30)*3mm},{(1+sin(30))*0*3mm}) +(30:3mm) -- +(90:3mm) -- +(150:3mm) -- +(-150:3mm) -- +(-90:3mm) -- +(-30:3mm) -- cycle;
  \draw[fill=lightgray] ({((2*2)-0)*cos(30)*3mm},{(1+sin(30))*0*3mm}) +(30:3mm) -- +(90:3mm) -- +(150:3mm) -- +(-150:3mm) -- +(-90:3mm) -- +(-30:3mm) -- cycle;
  \draw[fill=lightgray] ({((2*3)-0)*cos(30)*3mm},{(1+sin(30))*0*3mm}) +(30:3mm) -- +(90:3mm) -- +(150:3mm) -- +(-150:3mm) -- +(-90:3mm) -- +(-30:3mm) -- cycle;
  \draw[fill=lightgray] ({((2*4)-0)*cos(30)*3mm},{(1+sin(30))*0*3mm}) +(30:3mm) -- +(90:3mm) -- +(150:3mm) -- +(-150:3mm) -- +(-90:3mm) -- +(-30:3mm) -- cycle;
  \draw[fill=lightgray] ({((2*5)-0)*cos(30)*3mm},{(1+sin(30))*0*3mm}) +(30:3mm) -- +(90:3mm) -- +(150:3mm) -- +(-150:3mm) -- +(-90:3mm) -- +(-30:3mm) -- cycle;
  \draw[fill=lightgray] ({((2*6)-0)*cos(30)*3mm},{(1+sin(30))*0*3mm}) +(30:3mm) -- +(90:3mm) -- +(150:3mm) -- +(-150:3mm) -- +(-90:3mm) -- +(-30:3mm) -- cycle;
  \draw[fill=lightgray] ({((2*7)-0)*cos(30)*3mm},{(1+sin(30))*0*3mm}) +(30:3mm) -- +(90:3mm) -- +(150:3mm) -- +(-150:3mm) -- +(-90:3mm) -- +(-30:3mm) -- cycle;
  \draw[fill=lightgray] ({((2*8)-0)*cos(30)*3mm},{(1+sin(30))*0*3mm}) +(30:3mm) -- +(90:3mm) -- +(150:3mm) -- +(-150:3mm) -- +(-90:3mm) -- +(-30:3mm) -- cycle;
  \draw[fill=lightgray] ({((2*9)-0)*cos(30)*3mm},{(1+sin(30))*0*3mm}) +(30:3mm) -- +(90:3mm) -- +(150:3mm) -- +(-150:3mm) -- +(-90:3mm) -- +(-30:3mm) -- cycle;
  \draw[fill=lightgray] ({((2*10)-0)*cos(30)*3mm},{(1+sin(30))*0*3mm}) +(30:3mm) -- +(90:3mm) -- +(150:3mm) -- +(-150:3mm) -- +(-90:3mm) -- +(-30:3mm) -- cycle;
  \draw[fill=lightgray] ({((2*0)-1)*cos(30)*3mm},{(1+sin(30))*1*3mm}) +(30:3mm) -- +(90:3mm) -- +(150:3mm) -- +(-150:3mm) -- +(-90:3mm) -- +(-30:3mm) -- cycle;
  \draw[fill=lightgray] ({((2*1)-1)*cos(30)*3mm},{(1+sin(30))*1*3mm}) +(30:3mm) -- +(90:3mm) -- +(150:3mm) -- +(-150:3mm) -- +(-90:3mm) -- +(-30:3mm) -- cycle;
  \draw[fill=lightgray] ({((2*2)-1)*cos(30)*3mm},{(1+sin(30))*1*3mm}) +(30:3mm) -- +(90:3mm) -- +(150:3mm) -- +(-150:3mm) -- +(-90:3mm) -- +(-30:3mm) -- cycle;
  \draw[fill=lightgray] ({((2*3)-1)*cos(30)*3mm},{(1+sin(30))*1*3mm}) +(30:3mm) -- +(90:3mm) -- +(150:3mm) -- +(-150:3mm) -- +(-90:3mm) -- +(-30:3mm) -- cycle;
  \draw[fill=lightgray] ({((2*4)-1)*cos(30)*3mm},{(1+sin(30))*1*3mm}) +(30:3mm) -- +(90:3mm) -- +(150:3mm) -- +(-150:3mm) -- +(-90:3mm) -- +(-30:3mm) -- cycle;
  \draw[fill=lightgray] ({((2*5)-1)*cos(30)*3mm},{(1+sin(30))*1*3mm}) +(30:3mm) -- +(90:3mm) -- +(150:3mm) -- +(-150:3mm) -- +(-90:3mm) -- +(-30:3mm) -- cycle;
  \draw[fill=lightgray] ({((2*6)-1)*cos(30)*3mm},{(1+sin(30))*1*3mm}) +(30:3mm) -- +(90:3mm) -- +(150:3mm) -- +(-150:3mm) -- +(-90:3mm) -- +(-30:3mm) -- cycle;
  \draw[fill=lightgray] ({((2*7)-1)*cos(30)*3mm},{(1+sin(30))*1*3mm}) +(30:3mm) -- +(90:3mm) -- +(150:3mm) -- +(-150:3mm) -- +(-90:3mm) -- +(-30:3mm) -- cycle;
  \draw[fill=lightgray] ({((2*8)-1)*cos(30)*3mm},{(1+sin(30))*1*3mm}) +(30:3mm) -- +(90:3mm) -- +(150:3mm) -- +(-150:3mm) -- +(-90:3mm) -- +(-30:3mm) -- cycle;
  \draw[fill=lightgray] ({((2*9)-1)*cos(30)*3mm},{(1+sin(30))*1*3mm}) +(30:3mm) -- +(90:3mm) -- +(150:3mm) -- +(-150:3mm) -- +(-90:3mm) -- +(-30:3mm) -- cycle;
  \draw[fill=lightgray] ({((2*10)-1)*cos(30)*3mm},{(1+sin(30))*1*3mm}) +(30:3mm) -- +(90:3mm) -- +(150:3mm) -- +(-150:3mm) -- +(-90:3mm) -- +(-30:3mm) -- cycle;
  \draw[fill=lightgray] ({((2*0)-2)*cos(30)*3mm},{(1+sin(30))*2*3mm}) +(30:3mm) -- +(90:3mm) -- +(150:3mm) -- +(-150:3mm) -- +(-90:3mm) -- +(-30:3mm) -- cycle;
  \draw[fill=lightgray] ({((2*1)-2)*cos(30)*3mm},{(1+sin(30))*2*3mm}) +(30:3mm) -- +(90:3mm) -- +(150:3mm) -- +(-150:3mm) -- +(-90:3mm) -- +(-30:3mm) -- cycle;
  \draw[fill=lightgray] ({((2*2)-2)*cos(30)*3mm},{(1+sin(30))*2*3mm}) +(30:3mm) -- +(90:3mm) -- +(150:3mm) -- +(-150:3mm) -- +(-90:3mm) -- +(-30:3mm) -- cycle;
  \draw[fill=lightgray] ({((2*3)-2)*cos(30)*3mm},{(1+sin(30))*2*3mm}) +(30:3mm) -- +(90:3mm) -- +(150:3mm) -- +(-150:3mm) -- +(-90:3mm) -- +(-30:3mm) -- cycle;
  \draw[fill=lightgray] ({((2*4)-2)*cos(30)*3mm},{(1+sin(30))*2*3mm}) +(30:3mm) -- +(90:3mm) -- +(150:3mm) -- +(-150:3mm) -- +(-90:3mm) -- +(-30:3mm) -- cycle;
  \draw[fill=lightgray] ({((2*5)-2)*cos(30)*3mm},{(1+sin(30))*2*3mm}) +(30:3mm) -- +(90:3mm) -- +(150:3mm) -- +(-150:3mm) -- +(-90:3mm) -- +(-30:3mm) -- cycle;
  \draw[fill=lightgray] ({((2*6)-2)*cos(30)*3mm},{(1+sin(30))*2*3mm}) +(30:3mm) -- +(90:3mm) -- +(150:3mm) -- +(-150:3mm) -- +(-90:3mm) -- +(-30:3mm) -- cycle;
  \draw[fill=lightgray] ({((2*7)-2)*cos(30)*3mm},{(1+sin(30))*2*3mm}) +(30:3mm) -- +(90:3mm) -- +(150:3mm) -- +(-150:3mm) -- +(-90:3mm) -- +(-30:3mm) -- cycle;
  \draw[fill=lightgray] ({((2*8)-2)*cos(30)*3mm},{(1+sin(30))*2*3mm}) +(30:3mm) -- +(90:3mm) -- +(150:3mm) -- +(-150:3mm) -- +(-90:3mm) -- +(-30:3mm) -- cycle;
  \draw[fill=lightgray] ({((2*9)-2)*cos(30)*3mm},{(1+sin(30))*2*3mm}) +(30:3mm) -- +(90:3mm) -- +(150:3mm) -- +(-150:3mm) -- +(-90:3mm) -- +(-30:3mm) -- cycle;
  \draw[fill=lightgray] ({((2*10)-2)*cos(30)*3mm},{(1+sin(30))*2*3mm}) +(30:3mm) -- +(90:3mm) -- +(150:3mm) -- +(-150:3mm) -- +(-90:3mm) -- +(-30:3mm) -- cycle;
  \draw[fill=lightgray] ({((2*0)-3)*cos(30)*3mm},{(1+sin(30))*3*3mm}) +(30:3mm) -- +(90:3mm) -- +(150:3mm) -- +(-150:3mm) -- +(-90:3mm) -- +(-30:3mm) -- cycle;
  \draw[fill=lightgray] ({((2*1)-3)*cos(30)*3mm},{(1+sin(30))*3*3mm}) +(30:3mm) -- +(90:3mm) -- +(150:3mm) -- +(-150:3mm) -- +(-90:3mm) -- +(-30:3mm) -- cycle;
  \draw[fill=lightgray] ({((2*2)-3)*cos(30)*3mm},{(1+sin(30))*3*3mm}) +(30:3mm) -- +(90:3mm) -- +(150:3mm) -- +(-150:3mm) -- +(-90:3mm) -- +(-30:3mm) -- cycle;
  \draw[fill=lightgray] ({((2*3)-3)*cos(30)*3mm},{(1+sin(30))*3*3mm}) +(30:3mm) -- +(90:3mm) -- +(150:3mm) -- +(-150:3mm) -- +(-90:3mm) -- +(-30:3mm) -- cycle;
  \draw[fill=lightgray] ({((2*4)-3)*cos(30)*3mm},{(1+sin(30))*3*3mm}) +(30:3mm) -- +(90:3mm) -- +(150:3mm) -- +(-150:3mm) -- +(-90:3mm) -- +(-30:3mm) -- cycle;
  \draw[fill=lightgray] ({((2*5)-3)*cos(30)*3mm},{(1+sin(30))*3*3mm}) +(30:3mm) -- +(90:3mm) -- +(150:3mm) -- +(-150:3mm) -- +(-90:3mm) -- +(-30:3mm) -- cycle;
  \draw[fill=lightgray] ({((2*6)-3)*cos(30)*3mm},{(1+sin(30))*3*3mm}) +(30:3mm) -- +(90:3mm) -- +(150:3mm) -- +(-150:3mm) -- +(-90:3mm) -- +(-30:3mm) -- cycle;
  \draw[fill=lightgray] ({((2*7)-3)*cos(30)*3mm},{(1+sin(30))*3*3mm}) +(30:3mm) -- +(90:3mm) -- +(150:3mm) -- +(-150:3mm) -- +(-90:3mm) -- +(-30:3mm) -- cycle;
  \draw[fill=lightgray] ({((2*8)-3)*cos(30)*3mm},{(1+sin(30))*3*3mm}) +(30:3mm) -- +(90:3mm) -- +(150:3mm) -- +(-150:3mm) -- +(-90:3mm) -- +(-30:3mm) -- cycle;
  \draw[fill=lightgray] ({((2*9)-3)*cos(30)*3mm},{(1+sin(30))*3*3mm}) +(30:3mm) -- +(90:3mm) -- +(150:3mm) -- +(-150:3mm) -- +(-90:3mm) -- +(-30:3mm) -- cycle;
  \draw[fill=lightgray] ({((2*10)-3)*cos(30)*3mm},{(1+sin(30))*3*3mm}) +(30:3mm) -- +(90:3mm) -- +(150:3mm) -- +(-150:3mm) -- +(-90:3mm) -- +(-30:3mm) -- cycle;
  \draw[fill=lightgray] ({((2*0)-4)*cos(30)*3mm},{(1+sin(30))*4*3mm}) +(30:3mm) -- +(90:3mm) -- +(150:3mm) -- +(-150:3mm) -- +(-90:3mm) -- +(-30:3mm) -- cycle;
  \draw[fill=lightgray] ({((2*1)-4)*cos(30)*3mm},{(1+sin(30))*4*3mm}) +(30:3mm) -- +(90:3mm) -- +(150:3mm) -- +(-150:3mm) -- +(-90:3mm) -- +(-30:3mm) -- cycle;
  \draw[fill=lightgray] ({((2*2)-4)*cos(30)*3mm},{(1+sin(30))*4*3mm}) +(30:3mm) -- +(90:3mm) -- +(150:3mm) -- +(-150:3mm) -- +(-90:3mm) -- +(-30:3mm) -- cycle;
  \draw[fill=lightgray] ({((2*3)-4)*cos(30)*3mm},{(1+sin(30))*4*3mm}) +(30:3mm) -- +(90:3mm) -- +(150:3mm) -- +(-150:3mm) -- +(-90:3mm) -- +(-30:3mm) -- cycle;
  \draw[fill=lightgray] ({((2*4)-4)*cos(30)*3mm},{(1+sin(30))*4*3mm}) +(30:3mm) -- +(90:3mm) -- +(150:3mm) -- +(-150:3mm) -- +(-90:3mm) -- +(-30:3mm) -- cycle;
  \draw[fill=lightgray] ({((2*5)-4)*cos(30)*3mm},{(1+sin(30))*4*3mm}) +(30:3mm) -- +(90:3mm) -- +(150:3mm) -- +(-150:3mm) -- +(-90:3mm) -- +(-30:3mm) -- cycle;
  \draw[fill=lightgray] ({((2*6)-4)*cos(30)*3mm},{(1+sin(30))*4*3mm}) +(30:3mm) -- +(90:3mm) -- +(150:3mm) -- +(-150:3mm) -- +(-90:3mm) -- +(-30:3mm) -- cycle;
  \draw[fill=lightgray] ({((2*7)-4)*cos(30)*3mm},{(1+sin(30))*4*3mm}) +(30:3mm) -- +(90:3mm) -- +(150:3mm) -- +(-150:3mm) -- +(-90:3mm) -- +(-30:3mm) -- cycle;
  \draw[fill=lightgray] ({((2*8)-4)*cos(30)*3mm},{(1+sin(30))*4*3mm}) +(30:3mm) -- +(90:3mm) -- +(150:3mm) -- +(-150:3mm) -- +(-90:3mm) -- +(-30:3mm) -- cycle;
  \draw[fill=lightgray] ({((2*9)-4)*cos(30)*3mm},{(1+sin(30))*4*3mm}) +(30:3mm) -- +(90:3mm) -- +(150:3mm) -- +(-150:3mm) -- +(-90:3mm) -- +(-30:3mm) -- cycle;
  \draw[fill=lightgray] ({((2*10)-4)*cos(30)*3mm},{(1+sin(30))*4*3mm}) +(30:3mm) -- +(90:3mm) -- +(150:3mm) -- +(-150:3mm) -- +(-90:3mm) -- +(-30:3mm) -- cycle;
  \draw[fill=lightgray] ({((2*0)-5)*cos(30)*3mm},{(1+sin(30))*5*3mm}) +(30:3mm) -- +(90:3mm) -- +(150:3mm) -- +(-150:3mm) -- +(-90:3mm) -- +(-30:3mm) -- cycle;
  \draw[fill=lightgray] ({((2*1)-5)*cos(30)*3mm},{(1+sin(30))*5*3mm}) +(30:3mm) -- +(90:3mm) -- +(150:3mm) -- +(-150:3mm) -- +(-90:3mm) -- +(-30:3mm) -- cycle;
  \draw[fill=lightgray] ({((2*2)-5)*cos(30)*3mm},{(1+sin(30))*5*3mm}) +(30:3mm) -- +(90:3mm) -- +(150:3mm) -- +(-150:3mm) -- +(-90:3mm) -- +(-30:3mm) -- cycle;
  \draw[fill=lightgray] ({((2*3)-5)*cos(30)*3mm},{(1+sin(30))*5*3mm}) +(30:3mm) -- +(90:3mm) -- +(150:3mm) -- +(-150:3mm) -- +(-90:3mm) -- +(-30:3mm) -- cycle;
  \draw[fill=lightgray] ({((2*4)-5)*cos(30)*3mm},{(1+sin(30))*5*3mm}) +(30:3mm) -- +(90:3mm) -- +(150:3mm) -- +(-150:3mm) -- +(-90:3mm) -- +(-30:3mm) -- cycle;
  \draw[fill=lightgray] ({((2*5)-5)*cos(30)*3mm},{(1+sin(30))*5*3mm}) +(30:3mm) -- +(90:3mm) -- +(150:3mm) -- +(-150:3mm) -- +(-90:3mm) -- +(-30:3mm) -- cycle;
  \draw[fill=lightgray] ({((2*6)-5)*cos(30)*3mm},{(1+sin(30))*5*3mm}) +(30:3mm) -- +(90:3mm) -- +(150:3mm) -- +(-150:3mm) -- +(-90:3mm) -- +(-30:3mm) -- cycle;
  \draw[fill=lightgray] ({((2*7)-5)*cos(30)*3mm},{(1+sin(30))*5*3mm}) +(30:3mm) -- +(90:3mm) -- +(150:3mm) -- +(-150:3mm) -- +(-90:3mm) -- +(-30:3mm) -- cycle;
  \draw[fill=lightgray] ({((2*8)-5)*cos(30)*3mm},{(1+sin(30))*5*3mm}) +(30:3mm) -- +(90:3mm) -- +(150:3mm) -- +(-150:3mm) -- +(-90:3mm) -- +(-30:3mm) -- cycle;
  \draw[fill=lightgray] ({((2*9)-5)*cos(30)*3mm},{(1+sin(30))*5*3mm}) +(30:3mm) -- +(90:3mm) -- +(150:3mm) -- +(-150:3mm) -- +(-90:3mm) -- +(-30:3mm) -- cycle;
  \draw[fill=lightgray] ({((2*10)-5)*cos(30)*3mm},{(1+sin(30))*5*3mm}) +(30:3mm) -- +(90:3mm) -- +(150:3mm) -- +(-150:3mm) -- +(-90:3mm) -- +(-30:3mm) -- cycle;
  \draw[fill=lightgray] ({((2*0)-6)*cos(30)*3mm},{(1+sin(30))*6*3mm}) +(30:3mm) -- +(90:3mm) -- +(150:3mm) -- +(-150:3mm) -- +(-90:3mm) -- +(-30:3mm) -- cycle;
  \draw[fill=lightgray] ({((2*1)-6)*cos(30)*3mm},{(1+sin(30))*6*3mm}) +(30:3mm) -- +(90:3mm) -- +(150:3mm) -- +(-150:3mm) -- +(-90:3mm) -- +(-30:3mm) -- cycle;
  \draw[fill=lightgray] ({((2*2)-6)*cos(30)*3mm},{(1+sin(30))*6*3mm}) +(30:3mm) -- +(90:3mm) -- +(150:3mm) -- +(-150:3mm) -- +(-90:3mm) -- +(-30:3mm) -- cycle;
  \draw[fill=lightgray] ({((2*3)-6)*cos(30)*3mm},{(1+sin(30))*6*3mm}) +(30:3mm) -- +(90:3mm) -- +(150:3mm) -- +(-150:3mm) -- +(-90:3mm) -- +(-30:3mm) -- cycle;
  \draw[fill=lightgray] ({((2*4)-6)*cos(30)*3mm},{(1+sin(30))*6*3mm}) +(30:3mm) -- +(90:3mm) -- +(150:3mm) -- +(-150:3mm) -- +(-90:3mm) -- +(-30:3mm) -- cycle;
  \draw[fill=lightgray] ({((2*5)-6)*cos(30)*3mm},{(1+sin(30))*6*3mm}) +(30:3mm) -- +(90:3mm) -- +(150:3mm) -- +(-150:3mm) -- +(-90:3mm) -- +(-30:3mm) -- cycle;
  \draw[fill=lightgray] ({((2*6)-6)*cos(30)*3mm},{(1+sin(30))*6*3mm}) +(30:3mm) -- +(90:3mm) -- +(150:3mm) -- +(-150:3mm) -- +(-90:3mm) -- +(-30:3mm) -- cycle;
  \draw[fill=lightgray] ({((2*7)-6)*cos(30)*3mm},{(1+sin(30))*6*3mm}) +(30:3mm) -- +(90:3mm) -- +(150:3mm) -- +(-150:3mm) -- +(-90:3mm) -- +(-30:3mm) -- cycle;
  \draw[fill=lightgray] ({((2*8)-6)*cos(30)*3mm},{(1+sin(30))*6*3mm}) +(30:3mm) -- +(90:3mm) -- +(150:3mm) -- +(-150:3mm) -- +(-90:3mm) -- +(-30:3mm) -- cycle;
  \draw[fill=lightgray] ({((2*9)-6)*cos(30)*3mm},{(1+sin(30))*6*3mm}) +(30:3mm) -- +(90:3mm) -- +(150:3mm) -- +(-150:3mm) -- +(-90:3mm) -- +(-30:3mm) -- cycle;
  \draw[fill=lightgray] ({((2*10)-6)*cos(30)*3mm},{(1+sin(30))*6*3mm}) +(30:3mm) -- +(90:3mm) -- +(150:3mm) -- +(-150:3mm) -- +(-90:3mm) -- +(-30:3mm) -- cycle;
  \draw[fill=lightgray] ({((2*0)-7)*cos(30)*3mm},{(1+sin(30))*7*3mm}) +(30:3mm) -- +(90:3mm) -- +(150:3mm) -- +(-150:3mm) -- +(-90:3mm) -- +(-30:3mm) -- cycle;
  \draw[fill=lightgray] ({((2*1)-7)*cos(30)*3mm},{(1+sin(30))*7*3mm}) +(30:3mm) -- +(90:3mm) -- +(150:3mm) -- +(-150:3mm) -- +(-90:3mm) -- +(-30:3mm) -- cycle;
  \draw[fill=lightgray] ({((2*2)-7)*cos(30)*3mm},{(1+sin(30))*7*3mm}) +(30:3mm) -- +(90:3mm) -- +(150:3mm) -- +(-150:3mm) -- +(-90:3mm) -- +(-30:3mm) -- cycle;
  \draw[fill=lightgray] ({((2*3)-7)*cos(30)*3mm},{(1+sin(30))*7*3mm}) +(30:3mm) -- +(90:3mm) -- +(150:3mm) -- +(-150:3mm) -- +(-90:3mm) -- +(-30:3mm) -- cycle;
  \draw[fill=lightgray] ({((2*4)-7)*cos(30)*3mm},{(1+sin(30))*7*3mm}) +(30:3mm) -- +(90:3mm) -- +(150:3mm) -- +(-150:3mm) -- +(-90:3mm) -- +(-30:3mm) -- cycle;
  \draw[fill=lightgray] ({((2*5)-7)*cos(30)*3mm},{(1+sin(30))*7*3mm}) +(30:3mm) -- +(90:3mm) -- +(150:3mm) -- +(-150:3mm) -- +(-90:3mm) -- +(-30:3mm) -- cycle;
  \draw[fill=lightgray] ({((2*6)-7)*cos(30)*3mm},{(1+sin(30))*7*3mm}) +(30:3mm) -- +(90:3mm) -- +(150:3mm) -- +(-150:3mm) -- +(-90:3mm) -- +(-30:3mm) -- cycle;
  \draw[fill=lightgray] ({((2*7)-7)*cos(30)*3mm},{(1+sin(30))*7*3mm}) +(30:3mm) -- +(90:3mm) -- +(150:3mm) -- +(-150:3mm) -- +(-90:3mm) -- +(-30:3mm) -- cycle;
  \draw[fill=lightgray] ({((2*8)-7)*cos(30)*3mm},{(1+sin(30))*7*3mm}) +(30:3mm) -- +(90:3mm) -- +(150:3mm) -- +(-150:3mm) -- +(-90:3mm) -- +(-30:3mm) -- cycle;
  \draw[fill=lightgray] ({((2*9)-7)*cos(30)*3mm},{(1+sin(30))*7*3mm}) +(30:3mm) -- +(90:3mm) -- +(150:3mm) -- +(-150:3mm) -- +(-90:3mm) -- +(-30:3mm) -- cycle;
  \draw[fill=lightgray] ({((2*10)-7)*cos(30)*3mm},{(1+sin(30))*7*3mm}) +(30:3mm) -- +(90:3mm) -- +(150:3mm) -- +(-150:3mm) -- +(-90:3mm) -- +(-30:3mm) -- cycle;
  \draw[fill=lightgray] ({((2*0)-8)*cos(30)*3mm},{(1+sin(30))*8*3mm}) +(30:3mm) -- +(90:3mm) -- +(150:3mm) -- +(-150:3mm) -- +(-90:3mm) -- +(-30:3mm) -- cycle;
  \draw[fill=lightgray] ({((2*1)-8)*cos(30)*3mm},{(1+sin(30))*8*3mm}) +(30:3mm) -- +(90:3mm) -- +(150:3mm) -- +(-150:3mm) -- +(-90:3mm) -- +(-30:3mm) -- cycle;
  \draw[fill=lightgray] ({((2*2)-8)*cos(30)*3mm},{(1+sin(30))*8*3mm}) +(30:3mm) -- +(90:3mm) -- +(150:3mm) -- +(-150:3mm) -- +(-90:3mm) -- +(-30:3mm) -- cycle;
  \draw[fill=lightgray] ({((2*3)-8)*cos(30)*3mm},{(1+sin(30))*8*3mm}) +(30:3mm) -- +(90:3mm) -- +(150:3mm) -- +(-150:3mm) -- +(-90:3mm) -- +(-30:3mm) -- cycle;
  \draw[fill=lightgray] ({((2*4)-8)*cos(30)*3mm},{(1+sin(30))*8*3mm}) +(30:3mm) -- +(90:3mm) -- +(150:3mm) -- +(-150:3mm) -- +(-90:3mm) -- +(-30:3mm) -- cycle;
  \draw[fill=lightgray] ({((2*5)-8)*cos(30)*3mm},{(1+sin(30))*8*3mm}) +(30:3mm) -- +(90:3mm) -- +(150:3mm) -- +(-150:3mm) -- +(-90:3mm) -- +(-30:3mm) -- cycle;
  \draw[fill=lightgray] ({((2*6)-8)*cos(30)*3mm},{(1+sin(30))*8*3mm}) +(30:3mm) -- +(90:3mm) -- +(150:3mm) -- +(-150:3mm) -- +(-90:3mm) -- +(-30:3mm) -- cycle;
  \draw[fill=lightgray] ({((2*7)-8)*cos(30)*3mm},{(1+sin(30))*8*3mm}) +(30:3mm) -- +(90:3mm) -- +(150:3mm) -- +(-150:3mm) -- +(-90:3mm) -- +(-30:3mm) -- cycle;
  \draw[fill=lightgray] ({((2*8)-8)*cos(30)*3mm},{(1+sin(30))*8*3mm}) +(30:3mm) -- +(90:3mm) -- +(150:3mm) -- +(-150:3mm) -- +(-90:3mm) -- +(-30:3mm) -- cycle;
  \draw[fill=lightgray] ({((2*9)-8)*cos(30)*3mm},{(1+sin(30))*8*3mm}) +(30:3mm) -- +(90:3mm) -- +(150:3mm) -- +(-150:3mm) -- +(-90:3mm) -- +(-30:3mm) -- cycle;
  \draw[fill=lightgray] ({((2*10)-8)*cos(30)*3mm},{(1+sin(30))*8*3mm}) +(30:3mm) -- +(90:3mm) -- +(150:3mm) -- +(-150:3mm) -- +(-90:3mm) -- +(-30:3mm) -- cycle;
  \draw[fill=lightgray] ({((2*0)-9)*cos(30)*3mm},{(1+sin(30))*9*3mm}) +(30:3mm) -- +(90:3mm) -- +(150:3mm) -- +(-150:3mm) -- +(-90:3mm) -- +(-30:3mm) -- cycle;
  \draw[fill=lightgray] ({((2*1)-9)*cos(30)*3mm},{(1+sin(30))*9*3mm}) +(30:3mm) -- +(90:3mm) -- +(150:3mm) -- +(-150:3mm) -- +(-90:3mm) -- +(-30:3mm) -- cycle;
  \draw[fill=lightgray] ({((2*2)-9)*cos(30)*3mm},{(1+sin(30))*9*3mm}) +(30:3mm) -- +(90:3mm) -- +(150:3mm) -- +(-150:3mm) -- +(-90:3mm) -- +(-30:3mm) -- cycle;
  \draw[fill=lightgray] ({((2*3)-9)*cos(30)*3mm},{(1+sin(30))*9*3mm}) +(30:3mm) -- +(90:3mm) -- +(150:3mm) -- +(-150:3mm) -- +(-90:3mm) -- +(-30:3mm) -- cycle;
  \draw[fill=lightgray] ({((2*4)-9)*cos(30)*3mm},{(1+sin(30))*9*3mm}) +(30:3mm) -- +(90:3mm) -- +(150:3mm) -- +(-150:3mm) -- +(-90:3mm) -- +(-30:3mm) -- cycle;
  \draw[fill=lightgray] ({((2*5)-9)*cos(30)*3mm},{(1+sin(30))*9*3mm}) +(30:3mm) -- +(90:3mm) -- +(150:3mm) -- +(-150:3mm) -- +(-90:3mm) -- +(-30:3mm) -- cycle;
  \draw[fill=lightgray] ({((2*6)-9)*cos(30)*3mm},{(1+sin(30))*9*3mm}) +(30:3mm) -- +(90:3mm) -- +(150:3mm) -- +(-150:3mm) -- +(-90:3mm) -- +(-30:3mm) -- cycle;
  \draw[fill=lightgray] ({((2*7)-9)*cos(30)*3mm},{(1+sin(30))*9*3mm}) +(30:3mm) -- +(90:3mm) -- +(150:3mm) -- +(-150:3mm) -- +(-90:3mm) -- +(-30:3mm) -- cycle;
  \draw[fill=lightgray] ({((2*8)-9)*cos(30)*3mm},{(1+sin(30))*9*3mm}) +(30:3mm) -- +(90:3mm) -- +(150:3mm) -- +(-150:3mm) -- +(-90:3mm) -- +(-30:3mm) -- cycle;
  \draw[fill=lightgray] ({((2*9)-9)*cos(30)*3mm},{(1+sin(30))*9*3mm}) +(30:3mm) -- +(90:3mm) -- +(150:3mm) -- +(-150:3mm) -- +(-90:3mm) -- +(-30:3mm) -- cycle;
  \draw[fill=lightgray] ({((2*10)-9)*cos(30)*3mm},{(1+sin(30))*9*3mm}) +(30:3mm) -- +(90:3mm) -- +(150:3mm) -- +(-150:3mm) -- +(-90:3mm) -- +(-30:3mm) -- cycle;
  \draw[fill=lightgray] ({((2*0)-10)*cos(30)*3mm},{(1+sin(30))*10*3mm}) +(30:3mm) -- +(90:3mm) -- +(150:3mm) -- +(-150:3mm) -- +(-90:3mm) -- +(-30:3mm) -- cycle;
  \draw[fill=lightgray] ({((2*1)-10)*cos(30)*3mm},{(1+sin(30))*10*3mm}) +(30:3mm) -- +(90:3mm) -- +(150:3mm) -- +(-150:3mm) -- +(-90:3mm) -- +(-30:3mm) -- cycle;
  \draw[fill=lightgray] ({((2*2)-10)*cos(30)*3mm},{(1+sin(30))*10*3mm}) +(30:3mm) -- +(90:3mm) -- +(150:3mm) -- +(-150:3mm) -- +(-90:3mm) -- +(-30:3mm) -- cycle;
  \draw[fill=lightgray] ({((2*3)-10)*cos(30)*3mm},{(1+sin(30))*10*3mm}) +(30:3mm) -- +(90:3mm) -- +(150:3mm) -- +(-150:3mm) -- +(-90:3mm) -- +(-30:3mm) -- cycle;
  \draw[fill=lightgray] ({((2*4)-10)*cos(30)*3mm},{(1+sin(30))*10*3mm}) +(30:3mm) -- +(90:3mm) -- +(150:3mm) -- +(-150:3mm) -- +(-90:3mm) -- +(-30:3mm) -- cycle;
  \draw[fill=lightgray] ({((2*5)-10)*cos(30)*3mm},{(1+sin(30))*10*3mm}) +(30:3mm) -- +(90:3mm) -- +(150:3mm) -- +(-150:3mm) -- +(-90:3mm) -- +(-30:3mm) -- cycle;
  \draw[fill=lightgray] ({((2*6)-10)*cos(30)*3mm},{(1+sin(30))*10*3mm}) +(30:3mm) -- +(90:3mm) -- +(150:3mm) -- +(-150:3mm) -- +(-90:3mm) -- +(-30:3mm) -- cycle;
  \draw[fill=lightgray] ({((2*7)-10)*cos(30)*3mm},{(1+sin(30))*10*3mm}) +(30:3mm) -- +(90:3mm) -- +(150:3mm) -- +(-150:3mm) -- +(-90:3mm) -- +(-30:3mm) -- cycle;
  \draw[fill=lightgray] ({((2*8)-10)*cos(30)*3mm},{(1+sin(30))*10*3mm}) +(30:3mm) -- +(90:3mm) -- +(150:3mm) -- +(-150:3mm) -- +(-90:3mm) -- +(-30:3mm) -- cycle;
  \draw[fill=lightgray] ({((2*9)-10)*cos(30)*3mm},{(1+sin(30))*10*3mm}) +(30:3mm) -- +(90:3mm) -- +(150:3mm) -- +(-150:3mm) -- +(-90:3mm) -- +(-30:3mm) -- cycle;
  \draw[fill=lightgray] ({((2*10)-10)*cos(30)*3mm},{(1+sin(30))*10*3mm}) +(30:3mm) -- +(90:3mm) -- +(150:3mm) -- +(-150:3mm) -- +(-90:3mm) -- +(-30:3mm) -- cycle;
\end{tikzpicture}
%
% End of merged contents of file "figures/hex.tikz"
%%%%%%%%%%%%%%%%%%%%%%%%%%%%%%%%%%%%%%%%%%%%%%%%%%%
\]

\begin{theorem*}[Hex theorem]
  If evey hexagon on the board is coloured black or white, then there is a path of black hexagons linking the two black
  sides or a path of white hexagons linking the two white sides.
\end{theorem*}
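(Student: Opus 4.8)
The plan is to give the boundary-tracing argument due to Gale. Orient the rhombus so that its four corners are labelled $T,R,B,L$ (top, right, bottom, left), with the two black sides being the edges $TL$ and $RB$ and the two white sides the edges $TR$ and $BL$; at each corner a black side meets a white side. The object to study is the set of \emph{boundary edges}: those edges of the hexagonal tiling that separate a black hexagon from a white one, where the black border is treated as a giant black hexagon and the white border as a giant white hexagon. First I would establish the crucial local fact: at every interior vertex exactly three hexagons meet, so the three cells are either monochromatic (giving $0$ boundary edges at that vertex) or two of one colour and one of the other (giving exactly $2$ boundary edges, flanking the odd cell). Hence every interior vertex has boundary-degree $0$ or $2$, while each corner has boundary-degree $1$, since there the black and white borders abut along a single edge.

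Next I would orient each boundary edge by the rule \emph{black on the left}. Because of the local $0/2$ structure, this turns the boundary into a disjoint union of directed simple paths and directed cycles, the path endpoints being exactly the four corners. A short inspection of the four corners, facing into the board, shows that $R$ and $L$ are \emph{sources} (the black-on-left edge points inward) while $T$ and $B$ are \emph{sinks} (it points outward). With two sources and two sinks and all other vertices balanced, there are exactly two directed boundary paths, pairing the sources to the sinks, so either (a) $L\to T$ and $R\to B$, or (b) $L\to B$ and $R\to T$. Now trace the chain of black hexagons hugging the left of such a path and the white hexagons hugging its right. In case (b) the path $L\to B$ has the black border $TL$ on its left at $L$ and the black border $RB$ on its left at $B$; since $TL$ and $RB$ are the two \emph{distinct} black sides, the left-hugging black cells form a black path joining them, so Black wins. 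In case (a) the same reasoning applied to the white cells on the right (whose borders at the two endpoints are the two distinct white sides $TR$ and $BL$) shows White wins. As (a) and (b) exhaust the possibilities, one colour always connects its two sides.

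The main obstacle, and the part deserving the most care, is verifying that the black-on-left orientation is genuinely \emph{consistent} along each path: at a degree-$2$ vertex one must check that of the two incident boundary edges exactly one is directed in and one out, which follows from the geometry of the odd-coloured cell occupying one $120^{\circ}$ sector. Equally, I would need to confirm that the cells hugging one side of a path are \emph{consecutively adjacent}, so that they really do form a connected monochromatic chain rather than a disconnected collection; this again reduces to examining how the left-cells at successive boundary edges share an edge across each traversed vertex. The remaining steps — that the path cannot stall or self-intersect (forced by the uniqueness of the continuation at each degree-$2$ vertex and the degree-$1$ corners), and that cases (a) and (b) are exhaustive (forced by the source/sink count) — are then routine.
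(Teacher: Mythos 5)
Your argument is correct, and it rests on the same core idea as the proof of Gale's that the paper reproduces---follow the interface between black and white cells keeping black on the left, and read off a winning chain from the cells hugging one side of that interface---but you package it quite differently. The paper's version is local and algorithmic: it grows a \emph{single} path step by step from one corner (turn right if the cell ahead is black, left if white), rules out self-intersection by observing that a first crossing would force all three edges at some vertex onto the path while at least one of them has the same colour on both sides, and concludes by finiteness that the path must exit at the east or west corner. Your version is global and graph-theoretic: you classify \emph{all} black/white boundary edges at once, prove the degree lemma (interior vertices have boundary-degree $0$ or $2$, corners odd degree), deduce that the boundary decomposes into exactly two corner-to-corner paths plus irrelevant cycles, and finish with a source/sink count. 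Your reorganization buys something real: non-self-intersection, termination, and the exhaustiveness of your cases (a) and (b) all fall out of the degree and source/sink counts rather than needing separate arguments, and the two winning colours are treated symmetrically. The cost is more setup---orienting the whole boundary, discarding cycles, and some corner bookkeeping (whether a corner has boundary-degree $1$ or $2$ depends on whether you count the edge along which the two border regions meet, though either convention leaves four odd attachment points and hence two paths). The two subtleties you flag, consistency of the black-on-left orientation through a degree-$2$ vertex and adjacency of consecutive left-hugging cells, are precisely the points that the paper's deliberately minimal presentation also delegates to the reader's spatial intuition, so nothing essential is missing from your account.
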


The following proof is due to Gale \cite[pp.~820--2]{gale_hex}, but is presented in a minimal form here:

\begin{proof}[Spatial proof]
  Define a path as follows. The path enters the board at the south, with the black side of the board on its left and
  white side of the board on its right. Inductively extend the path as so that there is always black on the left and
  white on the right: this is possible since whenever one reaches a junction of edges of hexagons, the hexagon or the
  part of the side of the board that lies directly ahead is either black (in which case the path turns right) or white
  (in which case the path turns left). The path thus defined does not intersect itself itself, since the first
  intersection would have to involve all three edges incident to a given point, at least one of which has the same
  colour on each side, which would contradict the path's definition. Since there are finitely many edges, the path must
  therefore exit the board at a join between the sides. The path always has black on the left and white on the right, so
  must exit either at the east (in which case the black hexagons on its left link the two black sides of the board) or
  the west (in which case the white hexagons on its right side link the two white sides of the board).
\end{proof}

This proof involves no diagram, but it does appeal to spatial thinking, in particular when the reader must visualize the
how the hexagons along one side of the path link the black or white sides. This appeal to spatial thinking has the merit
of leading to a much simpler proof than the previously-known one, which invoked the Jordan Curve Theorem; see the
discussion by Gale \cite[820]{gale_hex}. This is the first gain in simplicity supplied by spatial thinking:
\emph{avoidance of invocation of technical results.}

\section{Spatial reasoning and induction}
\label{sec:basicpictureproofs}

One of the canonical examples of a \enquote{proof without words} \cite[p.~60]{nelsen_proofswithoutwords1} and of the use of spatial thinking in proving a
numerical result, is of the formula for the sum of the first $n$ integers:

\begin{theorem*}
  $\sum_{i=1}^n i = \frac{n^2}{2} + \frac{n}{2}$.
\end{theorem*}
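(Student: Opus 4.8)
The plan is to give a spatial proof in the spirit of the surrounding discussion, representing the sum as a triangular array of dots and exploiting a symmetry to avoid both induction and technical calculation. First I would represent $\sum_{i=1}^n i$ as a staircase of pebbles: a row of one dot, above it a row of two dots, and so on up to a row of $n$ dots, so that the $i$th row from the bottom contains exactly $i$ dots and the whole left-justified configuration contains $\sum_{i=1}^n i$ dots in all.

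Next I would take a second identical copy of this staircase, rotate it through $180^\circ$, and slot it against the first so that the two interlock to form a solid rectangular array. The key observation is that every row of the combined figure contains the same number of dots: in the $i$th row the original staircase contributes $i$ dots and the rotated copy contributes the complementary $n+1-i$ dots, for a total of $n+1$ dots, and this happens across all $n$ rows. Hence the rectangle measures $n$ by $n+1$ and contains $n(n+1)$ dots.

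Since the rectangle is assembled from two congruent copies of the staircase, the original staircase contains exactly half as many dots, namely $\tfrac{n(n+1)}{2}$; expanding gives $\tfrac{n^2}{2} + \tfrac{n}{2}$, as required. The appeal of this route is precisely what the introduction highlights: there is no induction, no case division, and the only arithmetic is a single halving performed at the very end.

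The main obstacle I anticipate is not the counting but the justification of generality, since any drawn diagram necessarily fixes one particular value of $n$. One must be convinced that the interlocking-rectangle construction works uniformly for every $n$ \dash that the rotated copy always fits flush against the original, row by row, with no gaps or overlaps. This is exactly the epistemological worry raised earlier, and I would address it by keeping the crucial step described combinatorially (the $i$th row of one copy abuts the $(n+1-i)$th row of the other), so that the picture serves as an aid to \emph{seeing} a relation that holds for all $n$ rather than as evidence about a single case.
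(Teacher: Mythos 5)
Your proof is correct, but it is a genuinely different spatial argument from the one the paper gives. The paper's proof is the single diagram labelled \eqref{eq:sumoffirstn}: one staircase of unit \emph{squares} (not dots), with the squares along the diagonal half-shaded. The decomposition there is into a right triangle of legs $n$ and $n$, contributing area $n^2/2$, plus the $n$ shaded half-squares contributing $n/2$; this is why the theorem is stated in the form $\frac{n^2}{2}+\frac{n}{2}$ rather than $\frac{n(n+1)}{2}$ --- the diagram delivers exactly those two summands with no algebra at the end. Your route instead takes two congruent staircases of dots interlocking into an $n\times(n+1)$ rectangle and halves the count, which is the other canonical proof-without-words of this identity. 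Each has its own advantages: yours stays entirely within pebble-counting (doubling and halving), needs no appeal to the area formula for a triangle, and its generality is secured by the row-by-row complementarity $i+(n+1-i)=n+1$ that you rightly make explicit; the paper's version needs no second copy and no final expansion, and it is the version whose interpretive status (encoded induction versus suggested induction versus direct perception) the surrounding text analyses via Chihara and Brown. Your closing remarks about the generality worry are well taken and are essentially the concern the paper flags in its introduction, though note that the paper's own resolution in the ``mind's eye'' reading is perceptual (the particular $n$ in the diagram ``never being noticed'') rather than combinatorial as in your abutment argument.
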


\begin{proof}[Spatial proof]
  \begin{equation}
    \label{eq:sumoffirstn}
    \begin{tikzpicture}[x=2mm,y=2mm,baseline=9mm]
      \useasboundingbox (-1,1) -- (-8,8);
      \foreach \x in {1,...,7} {
        \foreach \y in {1,...,\x} {
          \draw (-\x,\y) -- +(-1,0) -- +(-1,1) -- +(0,1) -- cycle;,
        }
      };
      \clip (0,0) -- (-9,9) -| cycle;
      \foreach \x in {1,...,7} {
        \filldraw[gray] (-\x,\x) -- +(-1,0) -- +(-1,1) -- +(0,1) -- cycle;
      };
    \end{tikzpicture}
  \end{equation}
\end{proof}

\begin{proof}[Non-spatial proof, abbreviated]
  Proceed by induction on $n$:
  \begin{align*}
    \text{Base of induction:}\quad \sum_{i=1}^1 i &= 1 = \frac{1^2}{2}+\frac{1}{2}.\\
    \text{Inductive hypothesis:}\quad \sum_{i=1}^n i &= \frac{n^2}{2} + \frac{n}{2} \\
    \text{Inductive step:}\quad \sum_{i=1}^{\mathclap{n+1}} i &= \parens[\Big]{\sum_{i=1}^n i} + (n+1) \\
                                                  &= \frac{n^2}{2} + \frac{n}{2} + \frac{2n}{2} + \frac{2}{2} \\
                                                  &= \frac{(n+1)^2}{2} + \frac{n+1}{2}. \qedhere
  \end{align*}
\end{proof}

How does the spatial proof \enquote{work}? Brown \cite[pp.~40--1]{brown_philosophy} notes that mathematician and
philosophers divided into two roughly equal groups. One group holds that the diagram \emph{encodes} an induction, and is
thus a legitimate proof. The other group holds that the diagram is a heuristic device that \emph{suggests} an induction,
and that it is this suggested induction that is a legitimate proof. Brown himself holds that there is no induction
present, arguing that \textquote[{\cite[p.~40]{brown_philosophy}}]{Some \enquote{pictures} \ldots\ are windows to
  Plato's heaven}, or, less poetically but perhaps more accurately, that \textquote[{\cite[p.~40]{brown_philosophy}}]{As
  telescopes help the unaided eye, so some diagrams are instruments \ldots\ which help the unaided mind's eye.}

Do these opposing views have implications of simplicity in proof? Let us accept that the picture supplies a valid proof,
whether by encoding or suggesting an induction or by aiding the mind's eye. \emph{With this proviso}, I think that few
would disagree that the spatial proof is simpler than the inductive proof.

Let us consider first the possibility that the diagram encodes or suggests an induction. Chihara
\cite[pp.~302--3]{chihara_structural} describes the thought-process that the diagram triggers. In abbreviated form, it
is as follows: When one examines the diagram, one sees that to go from the $n$-th to the $(n+1)$-th row, one has to duplicate
the $n$-th row and add another square, and so there are $n+1$ squares in the $(n+1)$-th row.  Thus, \textquote{an
  intuitive version of mathematical induction} allows one to see that for any $n$, the number of squares in the $n$-th
row will be $n$. Therefore calculating $\sum_{i=1}^n i$ requires calculating the total number of squares in the diagram,
which is $n^2\!/2 + n/2$, by the formula for the area of a triangle plus the number of squares supplied by the shaded
half-squares.

One can view this thought-process as extracting an induction encoded in the diagram, or as constructing an induction
suggested by it. Whichever view one holds not seem relevant for simplicity: the point is that \emph{if} the proof is
through an induction triggered by the diagram, it must be simpler than the induction in the non-spatial proof. Of
course, the spatial proof does not require any notation and is simpler in that respect. But is it simpler in some other
way? Possibly the diagram serves to guide the reader through the induction in a way that is simpler than the non-spatial
proof. But drawing this inference seems unwarranted, for the two inductions are fundamentally different, and end at
different points. The induction in the non-spatial proof leads directly to the statement of the theorem. The induction
triggered by the diagram shows that the $1$ square plus $2$ squares plus \ldots plus $n$ squares form what may be
described as a \enquote{step-triangular shape} of side $n$, which is a geometric statement, and it is from this
geometric statement that the result is deduced.

Now return to the possibility suggested by Brown that the diagram aids the mind's eye, without any inductive process. In
this case, I suggest that the thought-process is that one immediately perceives that the partially shaded squares form a
diagonal, and that two conclusions follow from this immediately: the diagram is $n$ squares wide and $n$ squares high,
the particular number $n$ in the actual diagram never being noticed. Therefore the lengths of the rows are
$1,2,\ldots,n$. Calculating $\sum_{i=1}^n i$ thus requires calculating the total number of squares in the diagram, which
is $n^2/2 + n/2$, by the formula for the area of a triangle plus the number of squares supplied by the shaded
half-squares, as in Chihara's description above. Since the particular number $n$ in the diagram remained unnoticed, this holds
for all $n$.

The simplicity here seems to arise from the fact that the perception of the diagonal is immediate and the conclusions
from it are almost built into the geometric perception. That is, it is the absence of induction and its replacement with
spatial thinking (or, if one prefers, the view of Plato's heaven) that makes the picture proof simpler.

In summary, this discussion has suggested several ways in which spatial reasoning leads to gains in simplicity: first,
\emph{decreased complexity (or elimination) of notation}; second, depending on the position one takes on the triggering
of an induction, either an \emph{easier-to-understand induction} or an \emph{avoidance of induction}. However, it could
be argued that in the \enquote{non-inductive} thought-process I described above there could be a hidden induction that a
mathematically-trained reader can do \enquote{without thinking} and that an untrained reader could intuit. A more
convincing example of avoidance of induction will be considered in \fullref{Section}{sec:inversions} below.

\section{Definitions and inferences}
\label{sec:ferrers}

This section and the following ones consider spatial proofs involving partitions of numbers, using Ferrers
diagrams. Only the background necessary for understanding the proofs below is recalled here; see \cite{andrews_partitions}
for further reading.

A \defterm{partition} of a natural number (that is, positive integer) $n$ is a finite nonincreasing sequence of natural
numbers $\lambda = \parens{\lambda_1,\lambda_2,\ldots,\lambda_k}$ that sums to $n$; the $\lambda_i$ are the
\defterm{parts} of the partition. For example, a partition of $24$ is $\parens{8,6,6,3,1}$. A \defterm{Ferrers diagram}
or \defterm{Ferrers graph} is a visual representation of such a partition as an array of dots, left-aligned, with the
$i$-th row (counting from the top) containing $\lambda_i$ dots. For example, the following Ferrers diagram represents
the partition $\parens{8,6,6,3,1}$:
\[
  \begin{tikzpicture}[x=2mm,y=-2mm,baseline=-4mm]
    \foreach \y/\xsize in {1/8,2/6,3/6,4/3,5/1} {
      \foreach \x in {1,...,\xsize} {
        \fill[gray] (\x,\y) circle (1.5pt);
      }
    }
  \end{tikzpicture}
\]
Since a partition is a non-increasing sequence, the rows of a Ferrers diagram are non-increasing in length from top to
bottom, and a left-aligned array of dots forms a Ferrers diagram (that is, represents a partition) precisely when this
condition is satisfied.

Finite nonincreasing sequences of of natural numbers and Ferrers diagrams are thus alternative representations of
partitions. As Starikova \cite{starikova_why} has pointed out, alternative representations of mathematical objects can
certainly ease the doing of mathematics, not just in proof, but in discovery and in formulation of new concepts.

A fundamental result in partition theory, due to Euler, is the following:

\begin{theorem*}[{\cite[Theorem~1.4]{andrews_partitions}}]
  The number of partitions of $n$ with at most $m$ parts equals the number of partitions of $n$ in which no part exceeds
  $m$.
\end{theorem*}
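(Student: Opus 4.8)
The plan is to give a bijective proof using Ferrers diagrams, exploiting the symmetry of reflection across the main diagonal (\emph{conjugation}). The key observation is a dictionary between the two conditions in the statement and the two geometric dimensions of a diagram: a partition has \emph{at most $m$ parts} precisely when its Ferrers diagram has at most $m$ rows, while a partition in which \emph{no part exceeds $m$} is precisely one whose diagram has at most $m$ columns. If I can produce a number-of-dots-preserving bijection between diagrams of the first kind and diagrams of the second kind, the theorem follows, since the total number of dots in each diagram is exactly $n$.

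First I would define the conjugate of a Ferrers diagram as the array obtained by reflecting it across the main diagonal, so that the roles of rows and columns are interchanged; formally, the conjugate of $\lambda = \parens{\lambda_1,\ldots,\lambda_k}$ is the partition $\lambda'$ with $\lambda'_j = \abs{\gset{i}{\lambda_i \geq j}}$, the length of the $j$-th column of the original diagram. Reflection manifestly sends a diagram with at most $m$ rows to one with at most $m$ columns and conversely, since the first column of $\lambda$ (whose length is the number of rows) becomes the first row of $\lambda'$. It equally clearly preserves the number of dots. Finally, reflecting twice returns the original diagram, so conjugation is an involution and hence a bijection between the two classes of diagrams.

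The one point requiring care---and the main obstacle, such as it is---is checking that the reflected array is again a genuine Ferrers diagram, that is, that $\lambda'$ is again a non-increasing sequence. This is where the spatial viewpoint does the work: a left-aligned array whose row lengths decrease weakly from top to bottom reflects, across the diagonal, to another array of the same shape, and one simply \emph{sees} that the column lengths of a Ferrers diagram are themselves weakly decreasing from left to right. (Symbolically, if $j \leq j'$ then every row reaching column $j'$ also reaches column $j$, so that $\lambda'_j \geq \lambda'_{j'}$.) With this verified, the two sets are in bijection and therefore equinumerous, which is the claim.
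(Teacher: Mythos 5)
Your proof is correct and takes essentially the same route as the paper: conjugation of the Ferrers diagram by reflection across the main diagonal, observed to be a dot-preserving involution exchanging the row and column constraints. The paper even anticipates your non-spatial formula for $\lambda'_j$ and your care over well-definedness, noting that both are immediate from the spatial picture but require a little work symbolically.
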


\begin{proof}[Spatial proof]
  Reflect the Ferrers diagram of a partition along its main diagonal:
  \begin{equation}
    \label{eq:conjugacyferrers}
    \begin{tikzpicture}[x=2mm,y=-2mm,baseline=-4mm]
      \foreach \y/\xsize in {1/8,2/6,3/6,4/3,5/1} {
        \foreach \x in {1,...,\xsize} {
          \fill[gray] (\x,\y) circle (1.5pt);
        }
      }
    \end{tikzpicture}
    \quad\mapsto\quad
    \begin{tikzpicture}[x=2mm,y=-2mm,baseline=-4mm]
      \foreach \y/\xsize in {1/8,2/6,3/6,4/3,5/1} {
        \foreach \x in {1,...,\xsize} {
          \fill[gray] (\y,\x) circle (1.5pt);
        }
      }
    \end{tikzpicture}
  \end{equation}
  This gives a bijection between the two classes of partitions.
\end{proof}

This bijection could be defined non-spatially as
\begin{equation}
  \label{eq:conjugacyformal}
  (\lambda_1,\lambda_2,\ldots,\lambda_k) \mapsto (\mu_1,\mu_2,\ldots,\mu_p), \quad\left\{\parbox{40mm}{\raggedright where $\mu_i$ is the number of $\lambda_j$ that are greater than $i$.}\right..
\end{equation}
However, this definition is less simple. First, the reader grasps the spatial definition \enquote{reflecting along the
  diagonal} immediately, whereas one has to pause and think to understand \eqref{eq:conjugacyformal}. Second, there are
two inferences that are immediate from the spatial definition, but require some thought to deduce from
\eqref{eq:conjugacyformal}. The first inference is that the map is well-defined, in the sense that the image is a
partition, and indeed a partition \emph{of the same natural number $n$}. This is not \emph{difficult} to prove from \eqref{eq:conjugacyformal},
but is trivial from the spatial definition, since reflection preserves both the non-increasing lengths of lines and the
total number of dots. The second inference is that the map is a bijection (and indeed an involution): trivial from the
spatial definition, but requiring some work to prove from \eqref{eq:conjugacyformal}.

This is another gain in simplicity from spatial reasoning: \emph{simpler definitions of transformations using visual
  representations}, and \emph{simpler inference of properties of those transformations} (such as bijectivity).  Notice
that the particular choice of visual representation using Ferrers diagrams that enables the definition of the bijection
using reflection in \eqref{eq:conjugacyferrers}. Different visual representations are possible, such as representing
the partition $\parens{8,6,6,3,1}$ as
\[
  \begin{tikzpicture}[x=2mm,y=-2mm,baseline=-6mm]
    \foreach \y/\xsize in {1/8,2/6,3/6,4/3,5/1} {
      \foreach \x in {1,...,\xsize} {
        \fill[gray] ({\x-(\xsize/2)},\y) circle (1.5pt);
      }
    }
  \end{tikzpicture}
  \qquad
  \text{or}
  \qquad
  \begin{tikzpicture}[baseline=0mm]
    \foreach \x/\ysize in {1/8,2/6,3/6,4/3,5/1} {
      \foreach \y in {1,...,\ysize} {
        \fill[gray] ({(360/5)*\x+90}:{(\y)*2mm}) circle (1.5pt);
      };
    };
  \end{tikzpicture}
  \qquad
  \text{or}
  \qquad
  \begin{tikzpicture}[baseline=0mm]
    \foreach \x/\ysize in {1/8,2/6,3/6,4/3,5/1} {
      \draw[lightgray] (0,0) circle ({(\x)*2mm});
      \foreach \y in {1,...,\ysize} {
        \fill[white] ({(360/\ysize)*\y+90}:{(\x)*2mm}) circle (2.5pt);
        \fill[gray] ({(360/\ysize)*\y+90}:{(\x)*2mm}) circle (1.5pt);
      };
    };
  \end{tikzpicture}
  \quad,
\]
but none of these representations gives rise to such a simple definition of the bijection as
\eqref{eq:conjugacyferrers}, nor to such simple inference of its properties.

\section{Ferrers diagrams in a more spatial proof}
\label{sec:inversions}

Let $\ell,m,n \in \nset$. Let $\xi_1\xi_2\cdots \xi_{\ell+m}$ be a permutation of $1^\ell2^m$. An \defterm{inversion} in
this permutation is a pair $(i,j)$ such that $i < j$ and $\xi_i = 2$ and $\xi_j = 1$. (That is, a symbol $1$
  after a symbol $2$.) Let $\Inv(m,\ell;n)$ denote the set of permutations $\xi_1\xi_2\cdots x_{\ell+m}$ of $1^m2^\ell$
that have exactly $n$ inversions. Let $P(m,\ell;n)$ denote the set of partitions of $n$ into $\ell$ parts, each at most
$m$.

\begin{theorem*}[{\cite[Theorem 3.5]{andrews_partitions}}]
  $\abs[\big]{P(\ell,m;n)} = \abs[\big]{\Inv(\ell,m;n)}$.
\end{theorem*}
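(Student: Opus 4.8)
The plan is to prove the identity by exhibiting an explicit, visually transparent bijection between the two sets, in the spirit of the Ferrers-diagram proofs above. Each permutation $w = \xi_1 \xi_2 \cdots \xi_{\ell+m}$ of $1^\ell 2^m$ can be read as a monotone lattice path inside an $\ell \times m$ rectangle: scanning $w$ from left to right, I draw one unit step to the right for each symbol $1$ and one unit step upward for each symbol $2$. This sets up a bijection between the words counted by $\Inv(\ell,m;n)$ and the monotone lattice paths from the bottom-left to the top-right corner of the rectangle.

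Next I would identify the inversions with the cells of a Ferrers diagram. An inversion is a symbol $2$ occurring before a symbol $1$, that is, an upward step of the path followed (later in the scan) by a rightward step. The number of $1$'s lying after the $i$-th symbol $2$ is exactly the horizontal distance $g_i$ from the $i$-th upward step to the right edge of the rectangle; since the path is monotone, these distances satisfy $g_1 \ge g_2 \ge \cdots \ge g_m \ge 0$ with each $g_i \le \ell$. Thus the cells of the rectangle lying to the right of the path form the Ferrers diagram of a partition $(g_1,\ldots,g_m)$, and the total number of such cells, namely the total number of inversions of $w$, is $g_1 + \cdots + g_m$. Hence $w$ has exactly $n$ inversions precisely when this partition is a partition of $n$ with at most $m$ parts, each at most $\ell$, i.e.\ an element of $P(\ell,m;n)$.

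To finish I would check invertibility: any partition in $P(\ell,m;n)$ has a Ferrers diagram fitting inside the $\ell \times m$ rectangle, whose boundary is a monotone lattice path, which is in turn read off as a unique word in $1^\ell 2^m$. The two constructions are visibly mutually inverse, so they restrict to a bijection between $\Inv(\ell,m;n)$ and $P(\ell,m;n)$, yielding the claimed equality of cardinalities.

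I expect the only delicate points, and these are really matters of getting the set-up right rather than genuine difficulties, to be the bookkeeping: verifying that the row lengths $g_i$ are genuinely non-increasing (which is exactly where monotonicity of the path enters), matching conventions so that the resulting object has at most $m$ parts each at most $\ell$ rather than the transposed shape, and treating empty rows correctly so that \enquote{into $m$ parts} is read as permitting parts equal to zero. Once the lattice-path picture is fixed these become immediate, and, as with the conjugation argument in \fullref{Section}{sec:ferrers}, both the bijectivity and the fact that $n$ is preserved are apparent from the diagram rather than requiring a separate computation.
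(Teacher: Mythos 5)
Your proposal is correct and is essentially the paper's own argument: both identify the word in $1$'s and $2$'s with the monotone lattice path bounding a Ferrers diagram inside the $\ell \times m$ rectangle, and both match inversions bijectively with the cells (dots) of that diagram, so that bijectivity and the preservation of $n$ are read off the picture. The only difference is cosmetic --- you construct the path from the permutation and recover the partition, while the paper wraps the path around a given partition and reads off the permutation --- together with a choice of orientation conventions, which you rightly flag as mere bookkeeping.
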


The following proof is a modified version of \cite[Proof of Theorem 3.5]{andrews_partitions}.

\begin{proof}[Spatial proof]
  The aim is to define an explicit bijection between the set of partitions $P(m,\ell;n)$ and the set of permutations
  $\Inv(m,\ell;n)$.

  Consider a partition of $n$ with at most $\ell$ parts, each at most $m$. Place an $m \times \ell$ grid around its Ferrers
  diagram and \enquote{wrap} the diagram with a path following the grid, as follows:
  \begin{equation}
    \label{eq:wrapping}
    \begin{tikzpicture}[x=2mm,y=-2mm,baseline=-4mm]
      \foreach \y/\xsize in {1/8,2/6,3/6,4/3,5/1} {
        \foreach \x in {1,...,\xsize} {
          \filldraw[gray] (\x,\y) circle (1.5pt);
        }
      }
      \begin{scope}[shift={(1mm,-1mm)}]
        \draw[lightgray] (0,0) grid[xstep=2mm,ystep=2mm] (11,7);
        \draw[thick,line cap=round] (0,7) -- (0,5) -- (1,5) -- (1,4) -- (3,4) -- (3,3) -- (6,3) -- (6,1) -- (8,1) -- (8,0) -- (11,0);
        \node[anchor=south,inner sep=0mm] (topbrace) at (5.5,-.4) {$\overbrace{\hbox{\vrule width 22mm height 0mm depth 0mm}}$};
        \node[anchor=south,font=\small,inner sep=0mm,yshift=.6mm] at (topbrace.north) {$m$};
        \node[anchor=south,rotate=90] (sidebrace) at (.2,3.5) {$\overbrace{\vrule width 14mm height 0mm depth 0mm}$};
        \node[anchor=east,font=\small,inner sep=0mm] at (sidebrace.north) {$\ell$};
      \end{scope}
    \end{tikzpicture}
    \
  \end{equation}
  Trace the steps made by path through the grid starting at the upper-right and moving leftwards and downwards. If the
  path makes a downwards step, write $2$. If it makes a leftward step, write $1$. The resulting sequence contains $m$
  symbols $1$ and $\ell$ symbols $2$, and so is a permutation of $1^m2^\ell$.

  Note that there is a one-to-one correspondence between dots of the Ferrers diagram and symbols $1$ (leftward steps)
  after symbols $2$ (downward steps):
  \begin{equation}
    \label{eq:inversions}
    \begin{tikzpicture}[x=2mm,y=-2mm,baseline=-4mm]
      \filldraw[gray] (3,2) circle (1.5pt);
      \begin{scope}[shift={(1mm,-1mm)}]
        \draw[lightgray] (0,0) grid[xstep=2mm,ystep=2mm] (11,7);
      \end{scope}
      \filldraw[gray,very thick] (3,2) rectangle (3,4.5);
      \filldraw[gray,very thick] (3,2) rectangle (6.5,2);
      \begin{scope}[shift={(1mm,-1mm)}]
        \draw[thick,line cap=round] (0,7) -- (0,5) -- (1,5) -- (1,4) -- (3,4) -- (3,3) -- (6,3) -- (6,1) -- (8,1) -- (8,0) -- (11,0);
        \node[anchor=south,inner sep=0mm] (topbrace) at (5.5,-.4) {$\overbrace{\hbox{\vrule width 22mm height 0mm depth 0mm}}$};
        \node[anchor=south,font=\small,inner sep=0mm,yshift=.6mm] at (topbrace.north) {$m$};
        \node[anchor=south,rotate=90] (sidebrace) at (.2,3.5) {$\overbrace{\vrule width 14mm height 0mm depth 0mm}$};
        \node[anchor=east,font=\small,inner sep=0mm] at (sidebrace.north) {$\ell$};
      \end{scope}
    \end{tikzpicture}
    \
  \end{equation}
  So there are exactly $n$ inversions in this permutation.

  Since there is a one-to-one correspondence between partitions and paths, this defines a bijection between the
  permutations of $1^m 2^\ell$ with exactly $n$ inversions and the number of partitions of $n$ with at most $\ell$ parts,
  each of size at most $m$. This completes the proof.
\end{proof}

\subsection{Comparison of simplicity relative to a non-spatial proof}

Spatial thinking is used at the following steps of the proof:
\begin{itemize}
\item[(S1)] The definition of the path that wraps a Ferrers diagram of the partition. (Note that the definition is
  incomplete without the diagram. In particular, the reader is expected to understand the meaning of \enquote{wrap}
  directly from the diagram.)

  % \textbracks{The definition in \cite[Proof of Theorem~3.5]{andrews_partitions} is even more explicitly \enquote{by
  %     example} than the above proof: the reader is presented with the particular partition $(8,6,6,3,1)$ of $24$, which
  %   is one of the partitions in $P(11,7;24)$, which is the partition whose Ferrers diagram \eqref{eq:wrapping} actually
  %   shows and the particular sequence obtained from the path wrapping this diagram is exhibited. The proof does not
  %   include a general definition of the path that wraps the Ferrers diagram.}

\item[(S2)] The definition of the sequence determined by the path, which uses the visual idea of travelling along the path,
  and of making leftward or downward steps.
\item[(S3)] Deduction that this sequence is a permutation of $1^m 2^\ell$, which is immediate from the correspondence of
  symbols $1$ and $2$ to (respectively) leftward and downward steps, and the horizontal and vertical distance that the
  path must cover.
\item[(S4)] Deduction that there one-to-one correspondence between inversions in the permutation and points of $n$, which
  follows from the spatial arrangement of the dots in the Ferrers diagram.
\item[(S5)] Deduction that there is a one-to-one correspondence between partitions and paths, which is immediate because
  the Ferrers diagram is seen to precisely fill the path, so that a path determines the corresponding Ferrers diagram
  and vice versa.
\end{itemize}

A non-spatial syntactic proof could proceed (in outline) as follows:
\begin{itemize}
\item[(N1)] Definition of a map $\phi$ taking partitions to sequences, given by
  \begin{equation}
    \label{eq:syntacticsequence}
    \phi(\lambda) = 1^{m-\lambda_1}21^{\lambda_1-\lambda_2}21^{\lambda_2-\lambda_3}2 \cdots 1^{\lambda_{k-1}-\lambda_k}21^{\lambda_k}2^{\ell-k}.
  \end{equation}
\item[(N2)] Deduction that $\phi(\lambda)$ is a permutation of $1^m2^\ell$, which requires an explicit
  calculation of the total number of symbols $1$ and $2$ in the sequence defined by \eqref{eq:syntacticsequence}.
\item[(N3)] Deduction that $\phi(\lambda)$ contains exactly $n$ inversions, which again requires an explicit calculation
  using the definition \eqref{eq:syntacticsequence}.
\item[(N4)] Deduction that there is a one-to-one correspondence between $P(m,\ell;n)$ and $\Inv(m,\ell;n)$:
  \begin{itemize}
  \item[(N4a)] Proof of injectivity: given $\phi(\lambda)$, the tail of symbols $2$ at the end determines $k$; the
    string of symbols $1$ immediately to the right of the $k$-th symbol $2$ determines $\lambda_k$; by induction,
    knowing $\lambda_{i+1}$, the string of symbols $1$ immediately to the right of the $i$-th symbol $2$ determines
    $\lambda_i$.
  \item[(N4b)] Proving that the image of $P(m,k;n)$ is $\Inv(m,k;n)$: a notationally messy argument producing a
    partition mapping onto a given permuation of $1^m2^\ell$ with $n$ inversions.
  \end{itemize}
\end{itemize}

Notice that there is a broad correspondence in overall structure between the given spatial proof and the non-spatial
proof outlined above:
\begin{align*}
  \left.
  \begin{array}{r}
    \text{(S1)} \\
    \text{(S2)}
  \end{array}
  \right\} &\longleftrightarrow \text{(N1)}, \qquad
  \text{(S3)} \longleftrightarrow \text{(N2)}, \\
  \text{(S4)} &\longleftrightarrow \text{(N3)}, \qquad
  \text{(S5)} \longleftrightarrow \text{(N4)}
  \begin{cases}
    \text{(N4a)} \\
    \text{(N4b)}
  \end{cases}.
\end{align*}
Considered on this level of overall structure, therefore, there is little difference in the simplicity of the two
proofs. However, within corresponding steps, there are major differences in simplicity. The definition (N1) is
notationally less simple than (S1)--(S2), and in particular in (N1) it has been necessary to introduce notation for a
partition and its parts. Steps (S3) and (S4) seem simpler than (N2) and (N3): the former are immediate on making certain
visual observations, while the latter require explicit (though admittedly not difficult) calculations. Most
importantly, step (S5) is simpler, again being immediate, but (N4) in particular uses an induction.

There seem to be two gains in simplicity from spatial thinking here: \emph{simpler definitions of maps using visual
  representations}, and \emph{simpler inference of properties of those maps}. This is to be distinguished from the gains
in simplicity related to transformations discussed in \fullref{Section}{sec:ferrers}. Here, there is greater simplicity
even though the visual representation is being used to define, not another visual representation of [a mathematical
object of] the same species, but a sequence.

The contrast between use of induction in step (N4) but nowhere in the visual proof requires further attention since, as
discussed above in \fullref{Section}{sec:basicpictureproofs}, there is disagreement over whether the diagram
\eqref{eq:sumoffirstn} triggers an induction (whether by encoding or suggestion). It is thus necessary to consider the
possibility that, at least for some readers, \eqref{eq:wrapping} triggers an induction.

There is an obvious contrast with \eqref{eq:sumoffirstn}, because the increasing length of rows in
\eqref{eq:sumoffirstn} corresponds to the statement one wishes to deduce. That is, the visualization (of how additional
rows increasing in length) parallels the induction (on the number of rows). There seems no such obvious correspondence
in \eqref{eq:wrapping}. At this point, the proof considers the Ferrers diagram of a partition of $n$ with at most $\ell$
parts, each at most $m$. An induction on $\ell$ and/or $m$ corresponds to extending the grid to the right or
downwards. Certainly, the reader is called upon to visualize an $m \times \ell$ grid of for arbitrary $\ell$ and $m$,
but no conclusion is drawn directly from this in the way that the conclusion about lengths of rows is drawn in
\eqref{eq:wrapping}. One could consider induction on $n$, or the number of parts, or on the lengths of the parts. But
this would involve assuming the existence of a unique path wrapping around the Ferrers diagram and satisfying certain
properties, then adding one or more squares and deducing the existence of a unique path wrapping aroung the diagram and
satisfying certain other properties. (One hesitates to try to write down such an induction.) However, this
seems very far from the spatial thinking of wrapping the Ferrers diagram. So this seems to be a rather more certain
instance of spatial thinking leading to a gain in simplicity by \emph{avoidance of induction} than the proof discussed in
\fullref{Section}{sec:basicpictureproofs}.

Returning to the relative simplicity of the two proofs: the spatial proof is also simpler because it has \emph{greater
  unity}: the definition of the path and all but one deduction step appeal to spatial thinking by calling on the same
diagram. The remaining step deductive step appeals to spatial thinking about a second diagram that is very closely
related to the first. Once could even combine the diagrams, but this requires using a diagram for (S1)--(S3) that
contains information that is not relevant until (S4). This would arguably be a decrease in simplicity; it certainly
seems to be a decrease in elegance. This increased unity is perhaps linked with \emph{better surveyability} of the
proof: the unity increases the ease with which \textquote[{\cite[p.~59]{tymoczko_4colourphil} (emphasis in
  original)}]{[t]he mathematician \emph{surveys} the proof in its entirety and thereby comes to \emph{know} the
  conclusion}.

\subsection{Between the proofs}

In (S1), the path is defined by example. One could avoid this and use an explicit definition:
\begin{itemize}
\item[(T1)] For the partition $\lambda = \parens{\lambda_1,\ldots,\lambda_k}$ the path is the map
  $\pi_\lambda : [0,1] \to \rset^2$ defined by\ldots,
\end{itemize}
giving explicit coordinates for the images of each point $\pi_\lambda(t)$. This still defines the path as an object in
the Euclidean plane, but without requiring spatial thinking of the reader. (Although many readers would doubtless try to
visualize the definition.) Is (T1) more or less simple than (S1)? On the one hand, (T1) has introduced a new concept, in
the form of an explicit coordinate system; on the other hand, it has eliminated a concept, in the form of the Ferrers
diagram.

There could conceivably be many proofs beginning with (T1): here only one is considered. Suppose the next steps are:
\begin{itemize}
  \item[(T2)] Definition of a sequence from the path $\pi_\lambda$.
  \item[(T3)] Deduction of what the sequence is in terms of $\lambda$.
\end{itemize}
The conclusion of (T3) would be \eqref{eq:syntacticsequence}; the proof could then proceed via (N2)--(N4). Now, (T1) was
an attempt to explicitly define the path used in (S1) without using spatial thinking. Thus the proof (T1)--(T3),
(N2)--(N4) can be thought of as between the spatial proof (S1)--(S5) and the non-spatial proof (N1)--(N4). However, it
is certainly less simple than either the spatial or the non-spatial proofs. It shares the explicit calculations and
induction of the non-spatial proof, and yet it uses the path $\pi_\lambda$ to do what (N1) does by definition.

Thus a na\"{\i}ve attempt to \enquote{de-spatialize} the original spatial proof may result in a decrease in simplicity,
even if there is another non-spatial proof that is of greater simplicity in than the resulting
\enquote{de-spatialized} one. (Of course, in this particular case, a hypothetical mathematician who had formulated
(T1)--(T3) would doubtless notice that all mention of the paths $\pi_\lambda$ could be eliminated and would obtain the
non-spatial proof (N1)--(N4).)

\section{Spatial thinking and exhaustion of cases}

Euler's pentagonal number theorem is the following identity:

\begin{theorem*}
\[
  \prod_{m=1}^\infty (1-x^m) =
  1 + \sum_{n=0}^\infty c_nx^n,\quad
  \text{where }c_n =
  \begin{cases}
    0 & \text{if $n \neq k(3k\pm 1)/2$,} \\
    (-1)^k & \text{if $n = k(3k\pm 1)/2$,}
  \end{cases}
\]
\end{theorem*}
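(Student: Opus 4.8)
The plan is to give Franklin's bijective proof, a natural continuation of the Ferrers-diagram reasoning of \fullref{Section}{sec:ferrers}. First I would work in the ring of formal power series and expand the product: each factor contributes either $1$ or $-x^m$, so selecting $-x^m$ from the factors indexed by a finite set of distinct positive integers produces a monomial $(-1)^r x^N$, where $r$ is the number of chosen indices and $N$ their sum. Collecting by degree gives
\[
  \prod_{m=1}^\infty (1-x^m) = \sum_\lambda (-1)^{\ell(\lambda)} x^{\abs{\lambda}},
\]
where $\lambda$ ranges over all partitions into \emph{distinct} parts (including the empty partition, which supplies the constant term $1$) and $\ell(\lambda)$ is the number of parts. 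Only parts at most $n$ can contribute to $x^n$, so each coefficient is a finite sum. Hence the coefficient of $x^n$ equals $D_e(n)-D_o(n)$, where $D_e(n)$ and $D_o(n)$ count partitions of $n$ into an even and an odd number of distinct parts, and the theorem reduces to showing that $D_e(n)=D_o(n)$ for every $n$ except the pentagonal numbers $n=k(3k\pm1)/2$ with $k \geq 1$, where the two counts differ by exactly $(-1)^k$.

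Next I would build an almost-involution on partitions into distinct parts that reverses the parity of $\ell(\lambda)$, reading everything off the Ferrers diagram. For such a partition let $s$ be the smallest part (the length of the bottom row) and let $\sigma$ be the length of the \emph{slope}: the maximal run of rows, counted from the top, whose lengths decrease by exactly one from each row to the next. There are two complementary moves: one deletes the bottom row and places its $s$ dots one per row along the top $s$ rows of the slope; the other removes one dot from each of the $\sigma$ rows of the slope and reassembles them as a new bottom row. Comparing $s$ with $\sigma$ determines which move to apply, each move yields a partition into distinct parts of the opposite parity, and the two moves are mutually inverse. So in the generic case the partitions of $n$ cancel in parity-reversing pairs and contribute nothing to the coefficient.

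The crux, and the step where spatial thinking earns its keep, is the boundary analysis: determining exactly when \emph{neither} move is admissible, because performing it would either repeat a part or run off the diagram. Inspecting the picture shows that this happens precisely when the bottom row and the slope overlap, which forces the diagram to be one of the two staircases $(2k-1,2k-2,\ldots,k)$ or $(2k,2k-1,\ldots,k+1)$. I would then verify directly that these partition $n=k(3k-1)/2$ and $n=k(3k+1)/2$ respectively, each into exactly $k$ distinct parts, so each is the unique survivor at its value of $n$ and contributes $(-1)^k$ while all other partitions cancel. The whole difficulty lies in checking the admissibility conditions at this boundary --- whether a part would be duplicated, or whether the slope and the bottom row share a dot --- and these conditions, clumsy to state as inequalities among the $\lambda_i$, are immediate from the Ferrers diagram. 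This is the anticipated gain in simplicity: a single involution, almost visible at a glance, replaces both a case-ridden algebraic manipulation and any appeal to generating-function identities.
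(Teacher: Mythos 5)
Your proposal is Franklin's involution argument, which is exactly the proof the paper presents (it even uses the same two moves, there called the partial transformations $O$ and $\Omega$ on the base and the slope); your opening generating-function expansion supplies the reduction to $p_{\mathrm{e}}(n)-p_{\mathrm{o}}(n)$ that the paper merely asserts "can be shown". The one place you go astray is the step you yourself identify as the crux: it is not true that neither move is admissible "precisely when the bottom row and the slope overlap", nor does overlap force the diagram to be a staircase. Take $(4,3,2)$: the slope runs all the way down and shares its last dot with the base, yet the base-to-slope move is perfectly admissible and sends it to $(5,4)$ (and the inverse move sends $(5,4)$ back). The correct failure condition, as the paper states it, is that the base and slope meet \emph{and} in addition $s=\sigma$ or $s=\sigma+1$; only the conjunction of overlap with that length condition pins the diagram down to the two staircases $(2k-1,\ldots,k)$ and $(2k,\ldots,k+1)$. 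With that correction your boundary analysis, and hence the whole proof, goes through as in the paper.
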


It is related to partitions because it can be shown that $c_n = p_{\mathrm{e}}(n) - p_{\mathrm{o}}(n)$, where
$p_{\mathrm{e}}(n)$ and $p_{\mathrm{o}}(n)$ are, respectively, the numbers of partitions of $n$ into an even number and
into an odd number of unequal parts. The combinatorial proof by Franklin \cite{franklin_euler}, as explained by Hardy
\cite[\S~6.2]{hardy_ramanujan} and Hardy \& Wright \cite[\S~19.11]{hardy_introduction}, uses transformations of Ferrers
diagrams establish a near-correspondence between partitions of $n$ into an even number and into an odd number of unequal
parts, with exactly correspondence or an excess of one partition of each type giving rise to the cases where
$p_{\mathrm{e}}(n) - p_{\mathrm{o}}(n)$ is $0$, $1$, or $-1$.

\begin{proof}[Spatial proof, key steps only]
  Define the \defterm{base} to be the bottommost row of the diagram, and define the \defterm{slope} to be the longest
  line of dot starting at the top-rightmost and moving towards the bottom-left, as in the following diagram:
  \[
    \begin{tikzpicture}[x=2mm,y=-2mm,baseline=-4mm]
      \foreach \y/\xsize in {1/8,2/7,3/6,4/3,5/2} {
        \foreach \x in {1,2,...,\xsize} {
          \filldraw[gray] (\x,\y) circle (1.5pt);
        }
      }
      \node[font=\scriptsize,inner sep=.5mm,anchor=east] (beta) at (-1,5) {base};
      \draw[black] (beta) -- (1,5);
      \draw[black] (1,5) -- (2,5);
      \draw[black,fill=white] (1,5) circle (1.5pt);
      \draw[black,fill=white] (2,5) circle (1.5pt);
      \node[font=\scriptsize,inner sep=.5mm,anchor=west] (sigma) at (10,1) {slope};
      \draw[black] (sigma) -- (8,1);
      \filldraw[black] (8,1) circle (1.5pt);
      \filldraw[black] (7,2) circle (1.5pt);
      \filldraw[black] (6,3) circle (1.5pt);
      \draw (8,1) -- (6,3);
    \end{tikzpicture}
  \]
  Define partial transformations $O$ and $\Omega$ by (respectively) moving the base to lie parallel to the slope,
  provided this the result is a Ferrers diagram with unequal parts, and moving the slope to form a new base, provided
  that the result is a Ferrers diagram with unequal parts:
  \[
    \begin{aligned}
      O:\quad
      \begin{tikzpicture}[x=2mm,y=-2mm,baseline=-4mm]
        \foreach \y/\xsize in {1/6,2/5,3/4} {
          \foreach \x in {1,2,...,\xsize} {
            \filldraw[gray] (\x,\y) circle (1.5pt);
          }
        }
        \filldraw[black] (7,1) circle (1.5pt);
        \filldraw[black] (6,2) circle (1.5pt);
        \filldraw[black] (5,3) circle (1.5pt);
        \draw[black] (7,1) -- (5,3);
        \draw[black] (1,4) -- (2,4);
        \draw[black,fill=white] (1,4) circle (1.5pt);
        \draw[black,fill=white] (2,4) circle (1.5pt);
        \draw[white] (8,1) circle (1.5pt);
      \end{tikzpicture}
      \quad&\mapsto\quad
      \begin{tikzpicture}[x=2mm,y=-2mm,baseline=-4mm]
        \foreach \y/\xsize in {1/6,2/5,3/4} {
          \foreach \x in {1,2,...,\xsize} {
            \filldraw[gray] (\x,\y) circle (1.5pt);
          }
        }
        \filldraw[black] (7,1) circle (1.5pt);
        \filldraw[black] (6,2) circle (1.5pt);
        \filldraw[black] (5,3) circle (1.5pt);
        \draw[black] (7,1) -- (5,3);
        \draw[black] (8,1) -- (7,2);
        \draw[black,fill=white] (8,1) circle (1.5pt);
        \draw[black,fill=white] (7,2) circle (1.5pt);
      \end{tikzpicture}
      \quad; \\
      \Omega:\quad
      \begin{tikzpicture}[x=2mm,y=-2mm,baseline=-4mm]
        \foreach \y/\xsize in {1/7,2/6} {
          \foreach \x in {1,2,...,\xsize} {
            \filldraw[gray] (\x,\y) circle (1.5pt);
          }
        }
        \filldraw[black] (8,1) circle (1.5pt);
        \filldraw[black] (7,2) circle (1.5pt);
        \draw[black] (8,1) -- (7,2);
        \draw[black] (1,3) -- (5,3);
        \draw[black,fill=white] (1,3) circle (1.5pt);
        \draw[black,fill=white] (2,3) circle (1.5pt);
        \draw[black,fill=white] (3,3) circle (1.5pt);
        \draw[black,fill=white] (4,3) circle (1.5pt);
        \draw[black,fill=white] (5,3) circle (1.5pt);
      \end{tikzpicture}
      \quad&\mapsto\quad
      \begin{tikzpicture}[x=2mm,y=-2mm,baseline=-4mm]
        \foreach \y/\xsize in {1/7,2/6} {
          \foreach \x in {1,2,...,\xsize} {
            \filldraw[gray] (\x,\y) circle (1.5pt);
          }
        }
        \filldraw[black] (1,4) circle (1.5pt);
        \filldraw[black] (2,4) circle (1.5pt);
        \draw[black] (1,4) -- (2,4);
        \draw[black] (1,3) -- (5,3);
        \draw[black,fill=white] (1,3) circle (1.5pt);
        \draw[black,fill=white] (2,3) circle (1.5pt);
        \draw[black,fill=white] (3,3) circle (1.5pt);
        \draw[black,fill=white] (4,3) circle (1.5pt);
        \draw[black,fill=white] (5,3) circle (1.5pt);
        \filldraw[white] (8,1) circle (1.5pt);
      \end{tikzpicture}
      \quad.
    \end{aligned}
  \]
  Clearly $O$ and $\Omega$ are mutually inverse and so are partial bijections. The key to the proof is establishing that
  one of $O$ or $\Omega$ is defined except when the base and the slope meet and when the lengths of the base and slope
  are equal or when the length of the base exceeds the length of the slope by $1$:
  \begin{equation}
    \label{eq:oomegacases1}
    \begin{tikzpicture}[x=2mm,y=-2mm,baseline=-4mm]
      \foreach \y/\xsize in {1/6,2/5,3/4} {
        \foreach \x in {1,2,...,\xsize} {
          \filldraw[gray] (\x,\y) circle (1.5pt);
        }
      }
      \filldraw[black] (7,1) circle (1.5pt);
      \filldraw[black] (6,2) circle (1.5pt);
      \filldraw[black] (5,3) circle (1.5pt);
      \filldraw[black] (4,4) circle (1.5pt);
      \draw[black] (7,1) -- (4,4);
      \draw[black] (1,4) -- (4,4);
      \draw[black,fill=white] (1,4) circle (1.5pt);
      \draw[black,fill=white] (2,4) circle (1.5pt);
      \draw[black,fill=white] (3,4) circle (1.5pt);
      % Clip to get a semicircle
      \clip (1,3.5) rectangle (4,4.5);
      \draw[black,fill=white] (4,4) circle (1.5pt);
    \end{tikzpicture}
    \qquad\qquad
    \begin{tikzpicture}[x=2mm,y=-2mm,baseline=-4mm]
      \foreach \y/\xsize in {1/5,2/4,3/3} {
        \foreach \x in {1,2,...,\xsize} {
          \filldraw[gray] (\x,\y) circle (1.5pt);
        }
      }
      \filldraw[black] (6,1) circle (1.5pt);
      \filldraw[black] (5,2) circle (1.5pt);
      \filldraw[black] (4,3) circle (1.5pt);
      \draw[black] (6,1) -- (4,3);
      \draw[black] (1,3) -- (4,3);
      \draw[black,fill=white] (1,3) circle (1.5pt);
      \draw[black,fill=white] (2,3) circle (1.5pt);
      \draw[black,fill=white] (3,3) circle (1.5pt);
      % Clip to get a semicircle
      \clip (1,2.5) rectangle (4,3.5);
      \draw[black,fill=white] (4,3) circle (1.5pt);
    \end{tikzpicture}
  \end{equation}
  since it is precisely in these cases that for both $O$ and $\Omega$, attempting the transformations yields a
  configuration of dots that does not form a Ferrers diagram with unequal parts.

  The proof concludes by showing that $n = k(3k \pm 1)/2$, with an excess of one partition of odd or even type depending
  on the parity of $k$.
\end{proof}

Here, the key role played by spatial thinking is in realizing that $O$ or $\Omega$ is defined, except in the specified
cases. It is unnecessary to introduce notation or explicitly enumerate the cases for lengths of the base and slope. That
is, spatial reasoning allows \emph{simpler exhaustion of cases}. Further, it is spatial thinking that makes it possible
to use the condition \enquote{provided that the result is a Ferrers diagram with unequal parts}. Giving a non-spatial
definition of the partial transformations $O$ and $\Omega$ would require first defining the length of the base and the
slope of a partition $\lambda = \parens{\lambda_1,\ldots,\lambda_k}$ with unequal parts: the former is just $\lambda_k$,
but the length of the slope is the maximum $i$ such that $i \leq k$ and $\lambda_1,\ldots,\lambda_i$ is a sequence of
consecutive decreasing natural numbers. Then the map $O$ would be defined when $\lambda_k$ is less than this $i$. That
is, preliminary technical definitions would be required before the map $O$ can be defined. Thus we have an even stronger
instance of how spatial thinking can allow a simpler definition of a [partial] transformation, as discussed in
\fullref{Sections}{sec:ferrers} above. Here there a greater contrast between the simplicity of the spatial and non-spatial
definitions, even though the transformation itself is more complicated than \eqref{eq:conjugacyferrers}.

(In his original proof, Franklin \cite[pp.~448--9]{franklin_euler} does not attempt a general definition, but only
describes in text the transformation $O$ when the base contains $1$, $2$, or $3$ elements, and remarks that this can be
extended step by step to all partitions, excepting those \textquote[{\cite[p.~449 (my translation)]{franklin_euler}}]{in
  which the indicated process is not applicable}. Since Franklin does not give the conditions under which the
transformation $O$ is defined, even without proof, I suspect that his text is actually describing a process that he was
visualizing using Ferrers diagrams or some similar representation. Ferrers diagrams were first published by Sylvester in
1853 \cite{sylvester_impromptu}. Franklin's proof was published in 1881, so it is likely he was acquainted
with Ferrers diagrams. Furthermore, Sylvester \cite[p.~597]{sylvester_impromptu} speculates that Euler used used Ferrers
diagrams, so the idea may have been \enquote{in the air} for much longer. However, this is no more than a suspicion.)

\section{Spatial thinking, purity, and simplicity}

Arara \cite[pp.~209-211]{arana_onthealleged} has pointed out that, in many instances, impure proofs are held to have
greater simplicity. Impure proofs are those that establish a theorem by calling upon a different area of mathematics:
for example, Dirichlet's theorem that for any coprime natural numbers $a$ and $d$ there are infinitely many primes of
the form $a+nd$, a result whose statement is part of elementary number theory, was first proved by analysis. (For an
overview of the notion of purity in proof, see \cite{detlefsen_purity}.) In the examples concerning partitions above,
there certainly seems to impurity, although perhaps of a slightly different form: it is not another area of mathematics
that is called upon, but rather \enquote{pre-mathematical} spatial thinking.

Dedekind was partly motivated to develop the theory of what are now called Dedekind cuts to show that analytical
theorems could be re-expressed as theorems about natural numbers \cite[\S~V]{stein_logos}, which would alleviate the
epistemological concerns of impurity in proof (which go back to Aristotle's \enquote{met\'{a}basis eis \'{a}llo
  g\'{e}nos}). Similarly, as discussed above, the proofs using Ferrers diagrams could be re-expressed in non-spatial
terms. But, as shown, this change would immensely decrease the simplicity. Ferrers diagrams are more than simply another
form of notation like a sequence $\parens{\lambda_1,\ldots,\lambda_k}$. They are more than just a book-keeping
device. They are a vehicle that enables spatial thinking about partitions, and it is this spatial thinking that yields
the simpler proofs.

\bibliography{\jobname}
\bibliographystyle{alphaabbrv}

\end{document}